\renewcommand{\subset}{\subseteq}
\newtheorem{theorem}{Theorem}[section]
\newtheorem{lemma}[theorem]{Lemma}
\newtheorem{proposition}[theorem]{Proposition}
\newtheorem{remark}[theorem]{Remark}
\newtheorem{corollary}[theorem]{Corollary}
\title[Identity of parabolic Kazhdan-Lusztig polynomials]{An identity of parabolic Kazhdan-Lusztig polynomials arising from square-irreducible modules}
\author{Maxim Gurevich}
\address{Department of Mathematics, National University of Singapore, 10 Lower Kent Ridge Road, Singapore, 119076}
\email{matmg@nus.edu.sg}
\date{\today}
\newcommand{\gotM}{\mathfrak{m}}
\newcommand{\gotN}{\mathfrak{n}}
\DeclareMathOperator{\irr}{Irr}
\DeclareMathOperator{\seg}{Seg}
\DeclareMathOperator{\multi}{Mult}
\begin{document}


\begin{abstract}
We show a precise formula, in the form of a monomial, for certain families of parabolic Kazhdan-Lusztig polynomials of the symmetric group. 

The proof stems from results of Lapid-M\'{i}nguez on irreducibility of products in the Bernstein-Zelevinski ring. By quantizing those results into a statement on quantum groups and their canonical bases, we obtain identities of coefficients of certain transition matrices that relate Kazhdan-Lusztig polynomials to their parabolic analogues.

This affirms some basic cases of conjectures raised recently by Lapid.

\end{abstract}

\maketitle

\section{Introduction}
Given a Coxeter group $W$ and a standard parabolic subgroup $W_J$, the parabolic Kazhdan-Lusztig polynomials $\{P_{\sigma,\omega}(q)\}$ are a collection of integer polynomials attached to each pair of cosets $\sigma,\omega\in W/W_J$. They were initially defined by Deodhar \cite{deod-parab} as a parabolic generalization of ordinary Kazhdan-Lusztig polynomials.

While the ubiquity of Kazhdan-Luzstig polynomials in representation theory is well-established, the parabolic analogues attract a growing interest, fueled in part by discoveries on their geometric nature \cite{kash-parab, anti-sph}. Yet, explicit values or bounds on coefficients of the polynomials are not known in most cases (see, e.g. \cite{brenti-etc} for a discussion of the state of those efforts). 

In this note, we hope to stress a new approach towards the study of parabolic Kazhdan-Lusztig polynomials, at least for computational purposes. We provide an explicit monomial formula (Theorem \ref{main-thm}) for some polynomials in the case of $W= S_n$ (the symmetric group) and $W_J = S_m \times\cdots \times S_m$, for a divisor $m|n$.

Our approach makes use of the \textit{dual canonical basis} for the quantum group $U_v(\mathfrak{sl}_\infty)^+$. When comparing that basis with another PBW-type basis, it is known by results of Lusztig \cite{lus-canonical} that the values of the transition matrix are given by dimensions of intersection cohomology spaces of certain nilpotent varieties. It is also known that those dimensions can be expressed through coefficients of Kazhdan-Lusztig polynomials (see \cite{Hender} for a survey).

We make an observation (similar to the one made in \cite{burndan-kl}) that the expressions obtained by inverting this matrix (Eq. \eqref{eq-trans-inv}) can be thought of as a \textit{double-coset analogue} of parabolic Kazhdan-Lusztig polynomials. In particular, when the relevant cosets of $W/W_J$ happen to lie in the normalizer of $W_J$, we obtain the parabolic Kazhdan-Lusztig polynomials in the form of transition coefficients (as done in Proposition \ref{coeff-parab}, for our case of interest).

In \cite{LM3}, Lapid-M\'{i}nguez worked on reducibility questions in the so-called Bernstein-Zelevinski ring. They identified conditions under which an irreducible representation $\pi$ of the $p$-adic group $GL_N$ has the property that $\pi\times \pi$ is irreducible as well. Such $\pi$ was called a \textit{$\square$-irreducible} (square-irreducible) representation, a notion intimately related to that of \textit{real} modules, either for quantum affine algebras or KLR algebras (see \cite{kkko0}).

They noted that their irreducibility results can be applied to compute values of certain parabolic Kazhdan-Lusztig polynomials \cite[Corollary 10.9]{LM3} (pertaining to the mentioned groups). Subsequently, Lapid engaged in a computer-assisted exploration which culminated in surprisingly precise conjectures \cite{lapid-conj}, that relate certain parabolic Kazhdan-Lusztig polynomials to corresponding Kazhdan-Luzstig polynomials of the symmetric group.

The above mentioned Theorem \ref{main-thm} proves the basic cases of these conjectures. More precisely, our formula covers a substantial part of those cases in which a suitable ordinary Kazhdan-Lusztig polynomial is trivial. In other words, we show that smoothness properties of Schubert varieties of type $A$ imply simple formulas for certain parabolic Kazhdan-Lusztig polynomials.

We show that the dual canonical basis approach is the missing link which would push the Lapid-M\'{i}nguez irreducibility result into a quantized setting, i.e. extend a statement on the value of the polynomials at $q=1$ into a statement on the full polynomial. The underlying reason for such connection is that the Bernstein-Zelevinski ring can be viewed \cite{LNT},\cite{groj} as a specialization at $q=1$ of the quantum group $U_v(\mathfrak{sl}_\infty)^+$, with the dual canonical basis specializing to the basis of simple modules.

We are unaware at the moment of a geometric explanation of the phenomena indicated in this note. A somewhat different connection between the dual canonical basis and parabolic Kazhdan-Lusztig polynomials was noted in \cite{Frenkel98}. It was also pointed out by Bernard Leclerc that the form of our results bears a curious similarity to those of \cite{leclerc-m}. A direct reason for this remains unclear.

Our hope is that future work with a similar approach can shed more light on the nature of the double-coset sums involved in the transition matrices \eqref{eq-trans-inv}, and the quantum group meaning of the formulas described by Lapid's conjectures \cite{lapid-conj}.

\section{Type $A$ gadgets}

\subsection{Permutations and their associated polynomials}
We will write $S_n$ for the symmetric group of permutations on $\{1,\ldots,n\}$, equipped with the Bruhat order $\leq$. For $x\in S_n$, we let $\ell(x)$ denote its length and $\epsilon(x) = (-1)^{\ell(x)}$ its parity. We denote by $\omega_{0,n}\in S_n$ the longest permutation.

For each pair of permutations $\sigma, \omega\in S_n$ with $\sigma\leq \omega$, we write $P_{\sigma,\omega}\in \mathbb{Z}[q]$ for the corresponding Kazhdan-Lusztig polynomial. It is convenient to write $P_{\sigma,\omega}=0$ as the zero polynomial, when $\sigma\not\leq \omega$.

Suppose that $n=mk$. The group $W_m:= S_m\times\cdots \times S_m$ (product of length $k$) appears as a standard parabolic subgroups of $S_n$. Let $W^m\subset S_n$ be the set of minimal length representatives of the cosets of $S_n/W_m$. 

For a coset $\pi\in S_n/W_m$, we write $\tilde{\pi}\in W^m$ for its representative. We say that $\pi\leq \tau$ for a pair of cosets $\pi,\tau\in S_n/W_m$, when $\tilde{\pi}\leq \tilde{\tau}$.

The quotient group $N_{S_n}(W_m)/W_m$ is naturally identified with $S_k$. We write $r_m:S_k \to N(W_m)/W_m$ for the resulting isomorphism. For a permutation $x\in S_k$, we will also write $t_m(x)=\widetilde{r_m(x)}\in W^m$.

In \cite{deod-parab}, Deodhar defined two polynomials $\hat{P}^q_{\pi,\tau}, \hat{P}^{-1}_{\pi,\tau}\in \mathbb{Z}[q]$, for every pair of cosets $\pi,\tau\in S_n/W_m$ with $\pi\leq \tau$. These are (a particular case of) \textit{parabolic Kazhdan-Lusztig polynomials}.

Although defined in a separate setting, the parabolic Kazhdan-Lusztig polynomials are computationally related to the Kazhdan-Lusztig polynomials, due to the finiteness of $S_n$.

\begin{proposition}\cite{deod-parab}\cite[Proposition 1]{brenti-etc}\label{parab-ident}
 For every $\pi,\tau\in S_n/W_m$ with $\pi\leq \tau$, the identities  
    \[
    \hat{P}^q_{\pi,\tau} = \sum_{x \in W_m} \epsilon(x) P_{\tilde{\pi}x, \tilde{\tau}}\;, \qquad\hat{P}^{-1}_{\pi,\tau} = P_{\tilde{\pi}\omega_m, \tilde{\tau}\omega_m}
     \]
  hold, where $\omega_m$ is the longest element of $W_m$.
\end{proposition}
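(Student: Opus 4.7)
Both identities are classical and live naturally inside the Iwahori--Hecke algebra $H_n$ of $S_n$. The guiding idea is to realize each right-hand side as the coefficient of a bar-invariant basis element in a suitable parabolic quotient or subrepresentation of $H_n$, and then appeal to the axiomatic uniqueness of the Kazhdan--Lusztig normalization.

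Let $\{T_w\}$ and $\{C_w\}$ denote the standard and Kazhdan--Lusztig bases of $H_n$ over $\mathbb{Z}[q^{\pm 1/2}]$, with $C_w = q^{-\ell(w)/2}\sum_{\sigma \leq w} P_{\sigma,w}(q)\,T_\sigma$ and bar involution fixing each $C_w$. Let $H_m \subset H_n$ be the sub-Hecke algebra of $W_m$, and form the right $H_n$-modules $M^q$ and $M^{-1}$ induced from the sign and trivial characters of $H_m$, each carrying a standard basis $\{m_\pi\}_{\pi \in S_n/W_m}$ and a unique bar-invariant KL basis $\{\hat{C}^u_\tau\}$ whose standard-basis coefficients (after normalization) define $\hat{P}^u_{\pi,\tau}$. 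For the first identity I would construct the $H_n$-linear projection $\rho: H_n \to M^q$ sending $T_{\tilde\pi x} \mapsto \epsilon(x)\,q^{-\ell(x)/2}\,m_\pi$ for $\tilde\pi \in W^m,\ x \in W_m$, and verify that $\rho$ commutes with the bar involution. Computing $\rho(C_{\tilde\tau})$ and using $\ell(\tilde\pi x) = \ell(\tilde\pi) + \ell(x)$, the $m_\pi$-coefficient collapses, after the natural rescaling, to $\sum_{x \in W_m} \epsilon(x)\,P_{\tilde\pi x,\tilde\tau}$. Since $\rho(C_{\tilde\tau})$ is bar-invariant with the correct support and leading term, uniqueness of $\hat{C}^q_\tau$ forces equality, yielding the identity.

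For the second identity, I would instead use the embedding $\iota: M^{-1} \hookrightarrow H_n$, $m_\pi \mapsto T_{\tilde\pi}\,C_{\omega_m}$, whose image is $H_n \cdot C_{\omega_m}$ and which again commutes with the bar involution. The key algebraic input is that $C_\sigma\,C_{\omega_m} = C_{\sigma\omega_m}$ for every $\sigma \in W^m$, a standard consequence of the length-additivity $\ell(\sigma\omega_m) = \ell(\sigma) + \ell(\omega_m)$. Thus $\iota(\hat{C}^{-1}_\tau) = C_{\tilde\tau\omega_m}$, and reading off the coefficient at $T_{\tilde\pi\omega_m}$ in the $T$-expansion of $C_{\tilde\tau\omega_m}$ recovers exactly $P_{\tilde\pi\omega_m,\tilde\tau\omega_m}$.

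The principal technical hurdle, concentrated in the first identity, is showing that the alternating sum $\sum_{x \in W_m} \epsilon(x)\,P_{\tilde\pi x,\tilde\tau}$ genuinely lies in $\mathbb{Z}[q]$ and satisfies the strict degree bound $\deg < \tfrac{1}{2}(\ell(\tilde\tau) - \ell(\tilde\pi))$ characterizing parabolic KL polynomials. This cancellation is invisible term by term and encodes nontrivial combinatorics of Bruhat intervals meeting $W_m$-orbits; it is precisely what forces $\rho(C_{\tilde\tau})$ to coincide with $\hat{C}^q_\tau$ rather than to fall into a larger space of bar-invariant elements. Once this is in place, both identities follow immediately from the uniqueness of the respective parabolic KL bases.
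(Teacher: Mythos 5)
The paper does not supply its own argument for this proposition; it is quoted directly from Deodhar and from Brenti et al.\ (Proposition 1 of \cite{brenti-etc}). So your proposal must be judged on its own terms. Your general strategy --- realize the parabolic KL polynomials as coefficients of bar-invariant elements in the induced Hecke modules $M^q$ and $M^{-1}$, and invoke the uniqueness characterization --- is the standard one, and most of it is sound in spirit. However, there is a concrete error in the argument for the second identity.

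You assert that $C_\sigma C_{\omega_m} = C_{\sigma\omega_m}$ for every $\sigma\in W^m$, calling this ``a standard consequence of the length-additivity $\ell(\sigma\omega_m) = \ell(\sigma)+\ell(\omega_m)$.'' This is false. Length-additivity gives $T_\sigma T_{\omega_m} = T_{\sigma\omega_m}$, but not the analogous multiplicativity for the Kazhdan--Lusztig basis. A minimal counterexample occurs already in $S_3$ with $W_m=\langle s_1\rangle$ and $\sigma=s_1s_2\in W^m$: a direct computation (using $\mu(s_1,s_1s_2)=1$ in the recursion $C_wC_s = C_{ws} + \sum_{z<w,\,zs<z}\mu(z,w)C_z$) gives
\[
C_{s_1s_2}\,C_{s_1} \;=\; C_{s_1s_2s_1} + C_{s_1} \;\neq\; C_{s_1s_2s_1}\,.
\]
The correct statement is weaker and is what the argument actually needs: $C_{\tilde\tau\omega_m}$ lies in the submodule $H_nC_{\omega_m}$ (because $\tilde\tau\omega_m\,s < \tilde\tau\omega_m$ for every simple $s\in W_m$, so $C_{\tilde\tau\omega_m}T_s = qC_{\tilde\tau\omega_m}$), and when $C_{\tilde\tau\omega_m}$ is re-expanded in the module basis $\{T_{\tilde\pi}C_{\omega_m}\}_{\tilde\pi\in W^m}$, its coefficients are exactly the polynomials $P_{\tilde\pi\omega_m,\tilde\tau\omega_m}$ (after the usual $q$-power normalization). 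To see this, one uses the Kazhdan--Lusztig lemma that $P_{v,w} = P_{vs,w}$ whenever $ws<w$, which shows that $P_{v,\tilde\tau\omega_m}$ depends only on the coset $vW_m$; the collapse of the $T$-expansion of $C_{\tilde\tau\omega_m}$ onto the module basis then follows, and uniqueness of $\hat C^{-1}_\tau$ finishes it. I would replace the multiplicativity claim with this argument. The first identity, as you note yourself, still rests on the unproven cancellation/degree estimate, so as it stands the proposal is a plausible plan rather than a complete proof.
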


\subsection{Multisegments and bi-sequences}

Let $\seg$ denote the collection of \textit{segments} of integers, that is, formal pairs $[a,b]$ of integers $a,b$, such that $a\leq b$. 

We say that a pair of segments $[a_1,b_1],[a_2,b_2]\in \seg$ is \textit{in general position}, if $a_1\neq a_2$, $b_1\neq b_2$, $a_1\neq b_2+1$ and $a_2\neq b_1+1$.

We say that a segment $[a_1,b_2]$ \textit{precedes} a segment $[a_2,b_2]$, if $a_1 < a_2$, $b_1<b_2$ and $a_2\leq b_1+1$. In this case we say that the pair $[a_1,b_2],[a_2,b_2]$ is \textit{linked}.

When $\Delta_1= [a_1,b_1]$ precedes $\Delta_2 = [a_2,b_2]$, we naturally write $\Delta_1\cup\Delta_2: = [a_1,b_2]\in \seg$. If $\Delta_1, \Delta_2$ are in addition in general position, we can also write $\Delta_1 \cap \Delta_2: = [a_2,b_1]\in \seg$. 

Let $\multi= \mathbb{Z}_{\geq0}(\seg)$ denote the commutative monoid of \textit{multisegments} of integers, that is, multisets of segments, or more precisely, maps from $\seg$ to $\mathbb{Z}_{\geq0}$ with finite support.

There is a natural embedding of $\seg$ into $\multi$ which sends a segment to its indicator function. Thus, we will sometimes treat segments as elements in $\multi$.

For example, a multisegment $\gotM\in \multi$ can be given as
\[
\gotM = [1,2] + [1,2] + [0,4] + [5,5]\;.
\]

Let us recall the notation which was introduced in \cite{LM3}. 

A pair
\[
\mathcal{A} = \left(\begin{array}{ccc} a_1 & \ldots & a_k \\ b_1 & \ldots & b_k \end{array}\right)
\]
of two sequences of integers satisfying $a_1\leq \ldots \leq a_k$, $b_1\geq  \ldots \geq b_k$ and $a_i \leq b_{k+1-i}+1$, for all $1\leq i\leq k$, will be called a \textit{bi-sequence} of length $k$.

A bi-sequence $\mathcal{A}$ as above defines two standard parabolic subgroups of the group $S_k$, in the following manner. Let $P_1(\mathcal{A})< S_k$ (respectively, $P_2(\mathcal{A})$) be subgroup generated by transpositions $(i\,i+1)$, for which $b_i = b_{i+1}$ (respectively, $a_i = a_{i+1}$) holds.

We write $D(\mathcal{A}) =P_1(\mathcal{A})\setminus S_k / P_2(\mathcal{A})$ for the set of double-cosets related to $\mathcal{A}$. For $\sigma\in D(\mathcal{A})$, we will write $\tilde{\sigma}\in S_k$ for the shortest representative of $\sigma$.

We say that a bi-sequence $\mathcal{A}$ is \textit{regular}, if $P_1(\mathcal{A})$ and $P_2(\mathcal{A})$ are trivial groups. We say that $\mathcal{A}$ as above is \textit{strongly regular}, if it is regular and $\{a_1,\ldots ,a_k\}\cap \{b_1+1, \ldots, b_k+1\} = \emptyset$ holds.

Recall, that the permutation $\sigma_0 = \sigma_0(\mathcal{A})$ is defined for each bi-sequence $\mathcal{A}$ by the following recursion. Given $\sigma_0^{-1}(k), \ldots, \sigma_0^{-1}(i+1)$, we set
\[
\sigma_0^{-1}(i) = \max \{j\not\in \sigma_0^{-1}(\{i+1,\ldots, k\})\;:\; a_j\leq b_i+1\}\;.
\]
By \cite[Section 6.1]{LM3}, for any permutation $\sigma\in S_k$, the inequality $\sigma_0\leq \sigma$ holds, if and only if, $a_i \leq b_{\sigma(i)}+1$, for all $i$.

Given a bi-sequence $\mathcal{A}$ of length $k$ as above and a permutation $\sigma_0(\mathcal{A})\leq \sigma\in S_k$, we can construct a multisegment
\[
\gotM_\sigma(\mathcal{A}) = \sum_{i=1}^k [a_i, b_{\sigma(i)}]\;,
\]
by considering expressions of the form $[b+1,b]$ as empty segments.

Note, that $\gotM_{\sigma'}(\mathcal{A})=\gotM_{\sigma}(\mathcal{A})$, for any $\sigma' \in P_1(\mathcal{A})\sigma P_2(\mathcal{A})$. Thus, we can also write $\gotM_{\overline{\sigma}}(\mathcal{A})$ by specifying a double-coset $\overline{\sigma}\in D(\mathcal{A})$.

In fact, it can be easily seen that every element of $\multi$ can be written in the form $\gotM_\sigma(\mathcal{A})$, for some (non-unique) $\sigma$ and $\mathcal{A}$.

When $\mathcal{A}$ is strongly regular, it is clear that any pair of distinct segments which appears in $\gotM_\sigma(\mathcal{A})$ must be in general position.

Given a bi-sequence of length $k$ and an integer $m\geq1$, we denote by $\mathcal{A}^m$ the bi-sequence of length $n:=mk$ which results in replicating $m$ times each entry of $\mathcal{A}$.

For example,
\[
\mathcal{A} = \left(\begin{array}{ccc} 1 & 2 & 3 \\ 8 & 7 & 6 \end{array}\right),\quad \mathcal{A}^3 = \left(\begin{array}{ccccccccc} 1&1&1 & 2&2&2 & 3&3&3 \\ 8&8&8 & 7&7&7 & 6&6&6 \end{array}\right)\;.
\]

Note, that when $\mathcal{A}$ is a regular bi-sequence, we have $P_1(\mathcal{A}^m)= P_2(\mathcal{A}^m) = W_m$, as subgroups of $S_n$. It is easily seen that in the monoid $\multi$, we have
\begin{equation}\label{double-mult}
m\cdot \gotM_\sigma(\mathcal{A}) = \gotM_{r_m(\sigma)}(\mathcal{A}^m)\;,
\end{equation}
for all $\sigma_0(\mathcal{A})\leq \sigma\in S_k$.

\begin{lemma}\label{lem-213}
Suppose that $\sigma\in S_k$ is a $213$-avoiding permutation, i.e., there are no $1\leq i_1 < i_2 < i_3 \leq k$, for which $\sigma(i_2) < \sigma(i_1) < \sigma(i_3)$ holds.

Then, there exists a strongly regular bi-sequence $\mathcal{A}$, such that $\sigma = \sigma_0(\mathcal{A})$.
\end{lemma}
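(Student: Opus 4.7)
The plan is to construct $\mathcal{A}$ explicitly. Since $213$-avoidance is preserved under inversion, $\nu := \sigma^{-1}$ is also $213$-avoiding. By the greedy definition of $\sigma_0$, it suffices to produce a weakly decreasing sequence $k \geq m_1 \geq \ldots \geq m_k$ of integers with $m_i \geq k+1-i$, such that for each $i$ the value $\nu(i)$ is the largest element of $\{1, \ldots, m_i\}$ not lying in $\{\nu(i+1), \ldots, \nu(k)\}$. Given such $(m_i)$, fix any integer $K > k$, set $a_j := Kj$, and, for each $m$ occurring in the sequence, enumerate the indices with $m_i = m$ as $i_1 < \ldots < i_r$ and set $b_{i_l} := Km + (r-l)$. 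A direct check shows $\mathcal{A}$ is a strongly regular bi-sequence realizing $|\{j : a_j \leq b_i\}| = m_i$, hence $\sigma_0(\mathcal{A}) = \sigma$. (Strong regularity is automatic, since each $b_i + 1$ lies in the interval $[Km_i+1, Km_i+K-1]$, which contains no multiple of $K$.)

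We take $m_i$ to be its maximum admissible value, $m_i + 1 := \min\{v \in \{\nu(1), \ldots, \nu(i), k+1\} : v > \nu(i)\}$. The condition on $\nu(i)$ then holds by construction, and two claims remain, each relying on the $213$-avoidance of $\nu$.

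Monotonicity: when $\nu(i+1) < \nu(i)$, the inclusion $\{v : v > \nu(i+1)\} \supset \{v : v > \nu(i)\}$ immediately yields $m_{i+1} \leq m_i$. When $\nu(i+1) > \nu(i)$, write $u := m_i + 1$; we claim $u > \nu(i+1)$, for otherwise $u = \nu(j)$ for some $j < i$ with $\nu(i) < \nu(j) \leq \nu(i+1)$, and then $(j, i, i+1)$ forms a $213$-pattern. Given $u > \nu(i+1)$, a short computation yields $m_{i+1} = m_i$. The lower bound $m_i \geq k+1-i$ is analogous: if it failed, we would find $j < i$ with $\nu(i) < \nu(j) \leq k+1-i$; $213$-avoidance would force $\{\nu(i+1), \ldots, \nu(k)\} \subseteq \{1, \ldots, \nu(j)-1\}$ (else $(j, i, l)$ is a $213$-pattern for some $l > i$); comparing cardinalities would then pin $\nu(j) = k+1-i$ and $\{\nu(i+1), \ldots, \nu(k)\} = \{1, \ldots, k-i\}$, but the inequality $\nu(i) < \nu(j) = k+1-i$ would place $\nu(i)$ in that same set, contradicting that $\nu$ is a permutation.

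The main obstacle is the monotonicity case $\nu(i+1) > \nu(i)$: this is the sole step where $213$-avoidance genuinely enters and is the essential reason the construction exists. Once $(m_i)$ is in hand, the translation to an explicit bi-sequence $(a_j, b_i)$ is routine bookkeeping.
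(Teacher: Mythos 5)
Your proof is correct and takes essentially the same constructive route as the paper's: choose the $a_j$ widely spaced and place the $b_i$ so that $|\{j : a_j \le b_i+1\}|$ equals a chosen weakly decreasing sequence $(m_i)$ satisfying $m_i \ge k+1-i$ and the greedy-recovery condition. Writing $\nu = \sigma^{-1}$, your closed-form $m_i$ coincides, for $213$-avoiding $\nu$, with the $m_i = \max(\nu(i),\dots,\nu(k))$ that falls out of unwinding the paper's recursion for the $b_i$; the only difference is organizational (you impose the greedy condition by definition of $m_i$ and then use $213$-avoidance to check monotonicity and the lower bound, whereas the paper's $m_i$ satisfies those constraints automatically and $213$-avoidance is invoked — without detail — to recover $\sigma$), so you are in effect supplying the verification that the paper dismisses as ``clear.''
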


\begin{proof}
Let us choose a sequence of integers $a_1<< \ldots << a_k << a_{k+1}$ with large enough gaps. Next, we define $b_k,\ldots, b_1$ recursively. Let $b_k$ be an integer satisfying $a_{\sigma^{-1}(k)} < b_k+1  < a_{\sigma^{-1}(k)+1}$.

Supposing $b_k< \ldots < b_{i+1}$ were defined, we set $b_i$ to be an integer, such that $b_{i+1} < b_i$ and $a_j < b_i+1 << a_{j+1}$ are satisfied, for
\[
j = \min \{\sigma^{-1}(i)\leq j'\,:\; b_{i+1}+1 < a_{j'+1}\}.
\]
The pattern avoidance property clearly assures that the algorithm for $\sigma_0(\mathcal{A})$, when $\mathcal{A} = \left(\begin{array}{ccc} a_1 & \ldots & a_k \\ b_1 & \ldots & b_k \end{array}\right)$, produces back the permutation $\sigma$.

\end{proof}

\subsection{Graded nilpotent varieties}

Let us briefly recall a geometric interpretation of the elements of $\multi$. For more details the reader can refer, for example, to \cite[Section 2.3]{LNT}.
 
Consider a (complex) graded vector space $V_{\underline{d}}= \oplus_{k\in \mathbb{Z}}V_k$, with $\dim V_k = d_k$, where $\underline{d} = (d_k)_{k\in \mathbb{Z}_{\geq0}}$ is such that $d_k=0$ except for finitely many values of $k$. Let $E_{V_{\underline{d}}}$ be the space of endomorphisms $x$ of $V_{\underline{d}}$, which satisfy $xV_k \subset V_{k-1}$, for all $k$.

The group $\prod_{k\in \mathbb{Z}}GL(V_k)$ acts on $E_{V_{\underline{d}}}$ by conjugation. The (finite) collection of orbits of that action are in natural correspondence with a certain set of multisegments in $\multi$. Hence, we write each orbit $\mathcal{O}\subset E_{V_{\underline{d}}}$ as $\mathcal{O} = \mathcal{O}_\gotM$, for a unique $\gotM\in \multi$.

Going over all possible spaces $V_{\underline{d}}$ (i.e. varying $\underline{d}$) we obtain a bijection between $\multi$ and the collection of above described orbits in $\{E_{V_{\underline{d}}}\}_{\underline{d}}$. Thus, for each $\gotM\in \multi$, $\mathcal{O}_\gotM$ is a well-defined variety of graded nilpotent operators.

According to \cite{zel-kl}, given a multisegment $\gotM_\sigma(\mathcal{A})\in \multi$, for a bi-sequence $\mathcal{A}$ and (a double-coset of) a permutation $\sigma\in D(\mathcal{A})$, the closure of the corresponding variety $\mathcal{O}_{\gotM_\sigma(\mathcal{A})}$ decomposes as
\[
\overline{\mathcal{O}_{\gotM_\sigma(\mathcal{A})}} = \bigsqcup_{\substack{ \omega\in D(\mathcal{A}), \\ \tilde{\sigma}\leq \tilde{\omega}}} \mathcal{O}_{\gotM_\omega(\mathcal{A})}\;.
\]

Moreover, a Kazhdan-Lusztig polynomial, which is attached to a pair of permutations, is in fact encoded in the singularities of any nilpotent variety that is defined by the pair.

For any $\gotM,\gotN\in \multi$ satisfying the inclusion $\mathcal{O}_\gotN\subset \overline{\mathcal{O}_\gotM}$, it makes sense to denote by $\mathcal{H}^i(\overline{\mathcal{O}_\gotM})_\gotN$ the stalk at a point of $\mathcal{O}_\gotN$ of the $i$-th intersection cohomology sheaf of the variety $\overline{\mathcal{O}_\gotM}$.

\begin{theorem}\cite{zel-2rem}\label{thm-stalk}
For any bi-sequence $\mathcal{A}$ and $\sigma ,\omega \in D(\mathcal{A})$ with $\sigma_0(\mathcal{A})\leq \tilde{\sigma}\leq \tilde{\omega}$, we have
\[
\sum_iq^{i/2}\dim \left(\mathcal{H}^i\left(\overline{\mathcal{O}_{\gotM_\sigma(\mathcal{A})}}\right)_{\gotM_\omega(\mathcal{A})}\right)  = P_{\tilde{\omega}\omega_0, \tilde{\sigma}\omega_0}(q)\;.
\]
\end{theorem}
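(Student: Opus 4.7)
The plan is to reduce the statement to the classical Kazhdan--Lusztig theorem identifying the Poincar\'e polynomial of the intersection cohomology stalks of a Schubert variety with an ordinary Kazhdan--Lusztig polynomial, by exhibiting a smooth equivalence of singularities between the graded nilpotent orbit closures $\overline{\mathcal{O}_{\gotM_\sigma(\mathcal{A})}}$ and Schubert varieties in a partial flag manifold of type $A$. This is a type $A$ phenomenon that goes back to Zelevinsky \cite{zel-2rem}, and the task here is to spell out a route to it.

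First, I would extract from the bi-sequence $\mathcal{A}$ two standard parabolic subgroups of a suitable $GL_N$ (with $N$ determined by the multiplicities of the values $a_i$ and $b_i$), together with the corresponding pair of partial flag varieties $\mathcal{F}_1,\mathcal{F}_2$ whose flag types are read off from the row sequences of $\mathcal{A}$. Under the diagonal $GL_N$-action, the orbits on $\mathcal{F}_1\times\mathcal{F}_2$ are parametrized exactly by $D(\mathcal{A})$, with closure order given by the Bruhat order on shortest representatives.

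Second, I would realize the graded nilpotent space $E_{V_{\underline{d}}}$, for $\underline{d}$ read off from $\mathcal{A}$, as smoothly equivalent to (an open part of) $\mathcal{F}_1\times\mathcal{F}_2$, via the explicit $GL_N$-equivariant map sending a graded nilpotent $x$ to the pair of flags formed by the iterated kernels and images of $x$. Under this map, the orbit $\mathcal{O}_{\gotM_\omega(\mathcal{A})}$ corresponds to the $GL_N$-orbit on $\mathcal{F}_1\times\mathcal{F}_2$ labelled by the double coset of $\omega$, and orbit closures are matched. After fixing a base flag in $\mathcal{F}_1$, a diagonal orbit on $\mathcal{F}_1\times\mathcal{F}_2$ becomes a Schubert cell in $\mathcal{F}_2$, so the IC-stalk computation pulls back to one on the full flag variety $GL_N/B$.

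Third, I would apply the classical Kazhdan--Lusztig identification of IC stalks on $GL_N/B$ with polynomials $P_{u,w}(q)$. The translation from orbit labels to Schubert indices will turn out to be the involution $w\mapsto w\omega_0$, because orbit closures nest oppositely to Schubert closures: a larger nilpotent orbit has a smaller singularity, while a larger Schubert cell has a larger singularity, and conjugation/multiplication by $\omega_0$ is the order-reversing automorphism of the Bruhat poset that rectifies this. Composing with this twist produces exactly $P_{\tilde{\omega}\omega_0,\tilde{\sigma}\omega_0}(q)$, while the hypothesis $\sigma_0(\mathcal{A})\leq\tilde{\sigma}$ guarantees that $\tilde{\sigma}$ really is the shortest representative on both sides, so nothing is lost in passing between double cosets and single permutations.

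The hard part is step two: making the smooth equivalence between $E_{V_{\underline{d}}}$ and $\mathcal{F}_1\times\mathcal{F}_2$ precise enough to transport IC stalks, and not merely isolated topological information. In practice one shows the difference is a parabolic fibration whose effect on the intersection cohomology complex is only a degree shift that cancels when one computes stalks at a fixed orbit/cell. A secondary technicality is to align the $\omega_0$-twist with the closure-order convention on the nilpotent side already recorded before the statement of the theorem, so that the substitution $\tilde{\omega}\mapsto\tilde{\omega}\omega_0$, $\tilde{\sigma}\mapsto\tilde{\sigma}\omega_0$ is the one that indeed appears.
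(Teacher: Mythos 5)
The paper does not reproduce a proof of Theorem \ref{thm-stalk}; it cites the result to Zelevinsky (\cite{zel-2rem}). So the comparison is against Zelevinsky's original argument, and against that benchmark your sketch is on the right track: the structure of your argument---(i) convert the bi-sequence data into a pair of standard parabolics and the associated product of partial flag varieties, (ii) exhibit a smooth $GL_N$-equivariant morphism from the graded nilpotent space to (a stratum-preserving neighborhood in) $\mathcal{F}_1\times\mathcal{F}_2$, thereby identifying transverse slices, (iii) feed the IC-stalk computation through the Kazhdan--Lusztig theorem on $GL_N/B$, and (iv) insert the $\omega_0$-twist because the closure order on nilpotent orbits runs opposite to the closure order on Schubert cells---is precisely the route taken in \cite{zel-2rem}. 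Your observation that the hypothesis $\sigma_0(\mathcal{A})\le\tilde{\sigma}$ is what makes the relevant strata nonempty and the indexing well-posed is also correct; strictly speaking it guarantees that $\gotM_\sigma(\mathcal{A})$ and $\gotM_\omega(\mathcal{A})$ are genuine multisegments, hence that the orbits in question exist.

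Two small cautions, not gaps so much as places to tighten. First, ``smoothly equivalent to an open part of $\mathcal{F}_1\times\mathcal{F}_2$'' needs the standard IC-transfer lemma stated precisely: a smooth morphism with connected fibers of constant relative dimension $d$ satisfies $f^*IC_Y[d]\simeq IC_X$, so the stalk Poincar\'e series transported through $f$ agree on the nose once one uses the same (unshifted) normalization on both sides; what you describe as a ``degree shift that cancels'' is exactly this but should be phrased via the pullback isomorphism rather than as a hand-wave. Second, on step three, the Schubert-cell picture naturally lives on $\mathcal{F}_2$ (a partial flag variety), so you do get a parabolic Kazhdan--Lusztig quantity there; the reason the answer can nonetheless be written as an ordinary $P_{u,w}$ is exactly that the shortest-coset normalization baked into $D(\mathcal{A})$ is compatible with the Deodhar-type identity $\hat P^{-1}_{\pi,\tau}=P_{\tilde\pi\omega_m,\tilde\tau\omega_m}$ used elsewhere in the paper. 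Pinning down which longest element ($\omega_{0,k}$ here) enters, and verifying that $\tilde\sigma\le\tilde\omega$ turns into $\tilde\omega\omega_0\le\tilde\sigma\omega_0$, is worth writing out explicitly, but your direction of the twist is the correct one.
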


Given a bi-sequence $\mathcal{A}$ and permutations $\sigma_0(\mathcal{A})\leq \sigma\leq \omega$, let us denote the parameter
\[
c(\sigma,\omega,\mathcal{A}) = \dim \left(\mathcal{O}_{\gotM_{\sigma}(\mathcal{A})}\right) - \dim \left(\mathcal{O}_{\gotM_{\omega}(\mathcal{A})}\right)\;.
\]
\begin{lemma}\label{dim-formula}
For a strongly regular bi-sequence $\mathcal{A}$, permutations $\sigma_0(\mathcal{A})\leq \sigma\leq \omega\in S_k$ and any integer $m\geq 1$, we have
\[
c(r_m(\sigma),r_m(\omega),\mathcal{A}^m) = m^2(\ell(\omega)- \ell(\sigma))\;.
\]
\end{lemma}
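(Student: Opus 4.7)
My plan is to reduce to the case $m=1$ and then compute the orbit dimensions using the module-theoretic interpretation of $\mathcal{O}_\gotM$ as an orbit of a quiver representation. The reduction step is short: by the identity \eqref{double-mult}, $\gotM_{r_m(\sigma)}(\mathcal{A}^m) = m\cdot\gotM_\sigma(\mathcal{A})$, so the corresponding nilpotent representation is the $m$-fold direct sum of $M_{\gotM_\sigma(\mathcal{A})}$. The ambient group $\prod_k \GL(V_k^{\oplus m})$ has dimension $m^2\sum_k d_k^2$, and $\dim \operatorname{End}(N^{\oplus m}) = m^2\dim \operatorname{End}(N)$, so orbit-stabilizer yields $\dim \mathcal{O}_{m\cdot\gotM} = m^2 \dim \mathcal{O}_\gotM$. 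Hence $c(r_m(\sigma),r_m(\omega),\mathcal{A}^m) = m^2\, c(\sigma,\omega,\mathcal{A})$, and the lemma reduces to proving $c(\sigma,\omega,\mathcal{A}) = \ell(\omega)-\ell(\sigma)$ for strongly regular $\mathcal{A}$.

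For this, I would decompose $\dim \operatorname{End}(M_{\gotM_\sigma(\mathcal{A})})$ into pairwise $\Hom$ spaces between indecomposable segment modules. A direct check for the linear quiver with arrows $k\to k-1$ gives $\dim \Hom(M_{[a,b]}, M_{[a',b']}) = 1$ precisely when $a\leq a'\leq b\leq b'$, and $0$ otherwise. Under strong regularity, for $i\neq j$ this amounts exactly to the condition that $\Delta_i^\sigma := [a_i, b_{\sigma(i)}]$ precedes $\Delta_j^\sigma$, i.e., $i<j$, $\sigma(i)>\sigma(j)$ and $a_j\leq b_{\sigma(i)}$. Writing $P(\sigma)$ for the number of such ``linked'' ordered pairs, we obtain $\dim \operatorname{End}(M_{\gotM_\sigma(\mathcal{A})}) = k + P(\sigma)$. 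Since all $\gotM_\omega(\mathcal{A})$ with $\omega\geq\sigma_0$ lie in a common ambient $E_{V_{\underline{d}}}$ (by the Zelevinsky closure decomposition quoted above), this yields $c(\sigma,\omega,\mathcal{A}) = P(\omega)-P(\sigma)$.

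The heart of the argument is then showing that the complementary count $Q(\sigma) := \ell(\sigma) - P(\sigma)$, which enumerates inversions $(i,j)$ of $\sigma$ with $a_j > b_{\sigma(i)}$, is in fact constant on $\{\sigma : \sigma_0(\mathcal{A})\leq\sigma\}$. Introduce $f(j) := \max\{l : b_l \geq a_j\}$, a weakly decreasing function of $j$ (as $a_j$ increases and $b_l$ decreases). Strong regularity makes $a_j \leq b_{\sigma(j)}+1$ equivalent to $\sigma(j)\leq f(j)$, so $\sigma_0\leq\sigma$ translates to $\sigma(j)\leq f(j)$ for all $j$, while $a_j > b_{\sigma(i)}$ becomes $\sigma(i) > f(j)$. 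The latter forces the inversion requirement $\sigma(i)>\sigma(j)$ automatically, giving $Q(\sigma) = \sum_j \#\{i<j : \sigma(i) > f(j)\}$. Now for $i\geq j$, monotonicity yields $\sigma(i)\leq f(i)\leq f(j)$, so no such $i$ contributes to $\{i : \sigma(i) > f(j)\}$; since the total size of that set is $k - f(j)$, one concludes $Q(\sigma) = \sum_j (k-f(j))$, depending only on $\mathcal{A}$. Hence $P(\omega) - P(\sigma) = \ell(\omega)-\ell(\sigma)$, which together with the scaling step establishes the lemma. The main obstacle is precisely this last observation: one must recognize the right invariant $Q$ and see that the very constraint $\sigma\geq\sigma_0$, combined with the monotonicity of $f$, forces every ``disjoint-segment inversion'' of $\sigma$ into the strictly lower-triangular region $i<j$; once $f$ is introduced, the count is immediate.
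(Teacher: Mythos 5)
Your proof is correct, and it takes a genuinely different route from the paper. The paper's proof is a two-line citation: it applies the stabilizer-dimension formula of \cite{AFK} (Lemma~3.2 there) together with identity \eqref{double-mult} to get $\dim \operatorname{Stab}(x,m) = m^2(p(x) + \ell(x\omega_{0,k}))$, notes that $p(x)=k$ under strong regularity, and concludes via $\ell(\omega)-\ell(\sigma) = \ell(\sigma\omega_{0,k})-\ell(\omega\omega_{0,k})$. You instead rebuild the dimension count from scratch: first reduce to $m=1$ via the clean orbit--stabilizer scaling $\dim\mathcal{O}_{m\cdot\gotM} = m^2\dim\mathcal{O}_\gotM$ (both the ambient group and $\operatorname{End}(N^{\oplus m})$ scale by $m^2$), then compute $\dim\operatorname{End}(M_{\gotM_\sigma(\mathcal{A})}) = k + P(\sigma)$ by counting nonzero $\operatorname{Hom}$ spaces between segment modules of the linear quiver, and finally show that the deficiency $Q(\sigma)=\ell(\sigma)-P(\sigma)$ is constant on $\{\sigma\geq\sigma_0(\mathcal{A})\}$. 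That last step is the genuinely new ingredient relative to the paper: introducing $f(j)=\max\{l: b_l\geq a_j\}$, observing that $\sigma\geq\sigma_0$ is equivalent to $\sigma(j)\leq f(j)$ (where strong regularity is used to upgrade $a_j\leq b_{\sigma(j)}+1$ to $a_j\leq b_{\sigma(j)}$), and using the monotonicity of $f$ to collapse $Q(\sigma)$ into $\sum_j (k-f(j))$, which is manifestly $\sigma$-independent. Your argument is longer but self-contained (it does not rely on the black-box formula from \cite{AFK}) and more conceptual in that it isolates exactly why the answer depends only on Coxeter length: the ``far apart'' inversions cancel against the Bruhat constraint. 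The paper's route is the right one for a short note when the external reference is available; yours would be preferable in an exposition aiming to be self-contained.
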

\begin{proof}
Let us apply the formula in \cite[Lemma 3.2]{AFK}. It can be used to compute the dimensions of the stabilizers $Stab(x,m)$ of the action (of the same group) on the orbits $\mathcal{O}_{\gotM_{r_m(x)}(\mathcal{A}^m)}$, for $x\in S_k$. 

More precisely, it follows easily from the above mentioned formula and equation \eqref{double-mult} that $\dim Stab(x,m) = m^2(p(x) + \ell(x\omega_0))$, where $p(x)$ here stands for the number of segments in $\gotM_x(\mathcal{A})$. 

Since $\mathcal{A}$ is strongly regular, $p(x)=k$, for all $x\in S_k$. The result then follows by $\ell(\omega)-\ell(\sigma) = \ell(\sigma\omega_{0,k})- \ell(\omega\omega_{0,k})$.

\end{proof}

\section{Quantum groups}
We recall some basic constructions in the theory of quantum groups for the cases that we will require. We will largely follow \cite{LNT}.

Let us fix a number $q = v^{-2}\in \mathbb{C}^\times$ that is not a root of unity.

We set $U_v= U_v(\mathfrak{sl}_\infty)^+$ to be the $\mathbb{Q}(v)$-algbera generated by a sequence of elements $\{E_i\}_{i\in \mathbb{Z}}$, subject to the (quantum Serre) relations
\[
E_iE_j =E_jE_i\quad |i-j|>1\;,
\]
\[
E_i^2E_j - (v+v^{-1})E_iE_jE_i + E_jE_i^2 =0\quad |i-j|=1\;.
\]

\subsection{Bases and their transition matrices}

The algebra $U_v$ comes equipped with the \textit{dual\footnote{It is dual to the \textit{canonical basis} relative to an inner product on the algebra. We will not require these details which can be found in \cite{LNT} and the references therein.} canonical basis} (or, Kashiwara's upper global crystal basis) $\mathcal{B}^\ast= \{G^\ast(\gotM)\}_{\gotM\in \multi}$, which is indexed by the elements of $\multi$.

We can also choose another basis $\mathcal{E}^\ast$ on $U_v$ which serves the role of a dual PBW basis. The elements of $\mathcal{E}^\ast = \{E^\ast(\gotM)\}_{\gotM\in \multi}$ are naturally indexed by multisegments as well as $\mathcal{B}^\ast$, and are constructed through the following induction process (see \cite[Sections 3.2 and 3.5]{LNT}).

We first set $T_\Delta = E^\ast(\Delta) := G^\ast(\Delta)$, for each single segment $\Delta\in \seg$. 

Now, suppose that $\gotM\in \multi$ is given as 
\[
\gotM = m_1\cdot \Delta_1 +\ldots+ m_k\cdot \Delta_k\;,
\]
where $m_1,\ldots,m_k\in \mathbb{Z}_{>0}$ are multiplicities and $\Delta_1 < \ldots < \Delta_k$ are segments ordered according to the relation $<$ defined on $\seg$ as
\[
[a,b] < [c,d] \quad\Leftrightarrow \quad b<d\;\mbox{or}\;\left\{\begin{array}{ll} b=d \\ a< c\end{array}\right.\;.
\]
We then set
\[
E^\ast(\gotM): =  v^{\sum_{i=1}^k{{m_i}\choose{2}}} T^{m_1}_{\Delta_1}\cdot\ldots\cdot T^{m_k}_{\Delta_k}\;.
\]

It was shown in \cite{lus-canonical} (see \cite[Theorem 9]{LNT}) that the transition matrix between the bases $\mathcal{E}^\ast$ and $\mathcal{B}^\ast$ is given as
\begin{equation}\label{eq-trans}
E^\ast(\gotM_\omega(\mathcal{A})) = \sum_{\substack{ \sigma\in D(\mathcal{A}), \\ \sigma_0(\mathcal{A})\leq \tilde{\sigma}\leq \tilde{\omega}}}\left( v^{c(\sigma,\omega, \mathcal{A})} \sum_i v^{-i}\dim \left(\mathcal{H}^i\left(\overline{\mathcal{O}_{\gotM_\sigma(\mathcal{A})}}\right)_{\gotM_\omega(\mathcal{A})}\right)\right) G^\ast(\gotM_\sigma(\mathcal{A}))\;,
\end{equation}
for any bi-sequence $\mathcal{A}$ and $\omega\in D(\mathcal{A})$ with $\sigma_0(\mathcal{A})\leq \tilde{\omega}$.

Using Theorem \ref{thm-stalk}, this is also equivalent to
\begin{equation}\label{eq-trans-kl}
E^\ast(\gotM_\omega(\mathcal{A})) = \sum_{\substack{ \sigma\in D(\mathcal{A}), \\ \sigma_0(\mathcal{A})\leq \tilde{\sigma}\leq \tilde{\omega}}} q^{-\frac{c(\sigma,\omega, \mathcal{A})}2}  P_{\tilde{\omega}\omega_0, \tilde{\sigma}\omega_0}(q)  G^\ast(\gotM_\sigma(\mathcal{A}))\;,
\end{equation}
which is where we see the appearance of Kazhdan-Lusztig polynomials in the quantum group setting.

By using the inversion formulas for Kazhdan-Lusztig polynomials (see e.g. \cite[Eq. (2.11)(2.12)]{Hender}) we can invert the transition matrix to obtain
\begin{equation}\label{eq-trans-inv}
G^\ast(\gotM_\omega(\mathcal{A})) = \sum_{\substack{ \sigma\in D(\mathcal{A}), \\ \sigma_0(\mathcal{A})\leq \tilde{\sigma}\leq \tilde{\omega}}}\left(q^{-\frac{c(\sigma,\omega, \mathcal{A})}2} \epsilon(\tilde{\omega})\sum_{x\in \sigma} \epsilon(x) P_{x, \tilde{\omega}}(q) \right) E^\ast(\gotM_\sigma(\mathcal{A}))\;.
\end{equation}
Note, that the coefficients in \eqref{eq-trans-inv} closely resemble the formulas for the parabolic Kazhdan-Lusztig polynomials $\hat{P}^q$, as in Proposition \ref{parab-ident}. Indeed, let us make that statement precise in our cases of interest. 

Let $\mathcal{A}$ be a regular bi-sequence of length $k$, and $m\geq1$ an integer. Then, elements of $N_{S_{km}}(W_m)/W_m$ can be viewed as \textit{double}-cosets in $D(\mathcal{A}^m)$. Consequently, we deduce the following.

\begin{proposition}\label{coeff-parab}
Given a regular bi-sequence $\mathcal{A}$, permutations $\sigma_0(\mathcal{A})\leq \sigma\leq \omega \in S_k$ and an integer $m\geq1$, 
\begin{enumerate}
  \item The coefficient of the basis element $G^\ast(\gotM_{r_m(\sigma)}(\mathcal{A}^m))$ in the $\mathcal{B}^\ast$-expansion of $E^\ast(\gotM_{r_m(\omega)}(\mathcal{A}^m))$ is given by
\[
q^{-\frac{m^2}2 (\ell(\omega)-\ell(\sigma))}\hat{P}^{-1}_{r_m(\omega\omega_{0,k}),r_m(\sigma\omega_0)}(q)\;.
\]
  \item
  The coefficient of the basis element $E^\ast(\gotM_{r_m(\sigma)}(\mathcal{A}^m))$ in the $\mathcal{E}^\ast$-expansion of $G^\ast(\gotM_{r_m(\omega)}(\mathcal{A}^m))$ is given by
\[
\epsilon(\sigma)^m \epsilon(\omega)^m q^{-\frac{m^2}2 (\ell(\omega)-\ell(\sigma))}\hat{P}^q_{r_m(\sigma),r_m(\omega)}(q)\;.
\]
\end{enumerate}

\end{proposition}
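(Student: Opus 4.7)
The statement is essentially a dictionary translation between the transition-matrix formulas \eqref{eq-trans-kl}, \eqref{eq-trans-inv} and Deodhar's parabolic identities in Proposition \ref{parab-ident}, applied to the bi-sequence $\mathcal{A}^m$. The plan has three steps: (i) substitute the relevant cosets into the transition formulas; (ii) insert Lemma \ref{dim-formula} to compute the $c$-exponent; and (iii) recognize the resulting sums over $W_m$ as parabolic Kazhdan--Lusztig polynomials. Regularity of $\mathcal{A}$ is used to ensure $P_1(\mathcal{A}^m) = P_2(\mathcal{A}^m) = W_m$, so that the shortest representatives in $D(\mathcal{A}^m)$ of cosets coming from $N_{S_{km}}(W_m)/W_m$ are precisely the elements $t_m(x)$.

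For part (1), I apply \eqref{eq-trans-kl} with bi-sequence $\mathcal{A}^m$ and double-cosets $r_m(\sigma)\le r_m(\omega)\in D(\mathcal{A}^m)$. Lemma \ref{dim-formula} turns the exponent $c(r_m(\sigma), r_m(\omega), \mathcal{A}^m)/2$ into $m^2(\ell(\omega)-\ell(\sigma))/2$. The resulting coefficient is $q^{-m^2(\ell(\omega)-\ell(\sigma))/2}\, P_{t_m(\omega)\omega_{0,km},\, t_m(\sigma)\omega_{0,km}}(q)$, and it remains to identify this Kazhdan--Lusztig polynomial with $\hat P^{-1}_{r_m(\omega\omega_{0,k}),\, r_m(\sigma\omega_{0,k})}(q)$ via the second identity of Proposition \ref{parab-ident}.

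For part (2), I apply \eqref{eq-trans-inv} in the same setting. Because $t_m(\sigma)$ normalizes $W_m$, the double coset $r_m(\sigma)\in D(\mathcal{A}^m)$ equals the single coset $t_m(\sigma)W_m$, so the inner sum factorizes as
\[
\sum_{x\in r_m(\sigma)}\epsilon(x)\,P_{x,\, t_m(\omega)}(q) \;=\; \epsilon(t_m(\sigma))\sum_{y\in W_m}\epsilon(y)\,P_{t_m(\sigma)y,\, t_m(\omega)}(q).
\]
The first identity of Proposition \ref{parab-ident} rewrites the inner sum as $\hat P^q_{r_m(\sigma), r_m(\omega)}(q)$, leaving the prefactor $\epsilon(t_m(\omega))\,\epsilon(t_m(\sigma))$ from \eqref{eq-trans-inv} together with the factorization.

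The only genuinely technical point will be the block-permutation identity
\[
t_m(x)\,\omega_{0,km} \;=\; t_m(x\omega_{0,k})\,\omega_m \;=\; \widetilde{r_m(x\omega_{0,k})}\,\omega_m,\qquad x\in S_k,
\]
which I verify directly from the decomposition $\omega_{0,km}=t_m(\omega_{0,k})\,\omega_m$ (reverse the blocks of size $m$, then reverse within each block). Applied to $x=\sigma$ and $x=\omega$, this feeds the $P_{\cdot,\cdot}$ in part (1) directly into the second identity of Proposition \ref{parab-ident}. Finally, $\ell(t_m(x))=m^2\,\ell(x)$ together with $m^2\equiv m\pmod 2$ gives $\epsilon(t_m(x))=\epsilon(x)^m$, which converts the sign prefactor in part (2) into $\epsilon(\sigma)^m\epsilon(\omega)^m$. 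Beyond this bookkeeping I do not anticipate a substantive obstacle, as all the nontrivial input (KL polynomials in intersection cohomology, the dimension computation, and the inversion of the transition matrix) is supplied by results already stated earlier.
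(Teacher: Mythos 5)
Your proposal is correct and follows essentially the same route as the paper: apply \eqref{eq-trans-kl} and \eqref{eq-trans-inv} to $\mathcal{A}^m$, plug in Lemma \ref{dim-formula} for the exponent, identify the double-coset $r_m(\sigma)$ with the single coset $t_m(\sigma)W_m$ (using that $t_m(\sigma)$ normalizes $W_m$), and close via Deodhar's identities in Proposition \ref{parab-ident} together with the block-permutation facts $t_m(x)\omega_{0,km}=t_m(x\omega_{0,k})\omega_m$ and $\epsilon(t_m(x))=\epsilon(x)^m$. Your write-up spells out the coset factorization and the decomposition $\omega_{0,km}=t_m(\omega_{0,k})\omega_m$ a bit more explicitly than the paper, but these are exactly the auxiliary identities the paper invokes, so there is no substantive difference.
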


\begin{proof}
The first item follows from \eqref{eq-trans-kl}, Proposition \ref{parab-ident} and  Lemma \ref{dim-formula}, after noting that, for all $\tau\in S_k$, $t_m(\tau \omega_{0,k}) = t_m(\tau)t_m(\omega_{0,k})$ and $t_m(\omega_{0,k})\omega_m= \omega_{0,mk}$.
  
As for the second item, by \eqref{eq-trans-inv}, this coefficient can be expressed as
\[
\epsilon(t_m(\sigma))\epsilon(t_m(\omega)) q^{-\frac{c(\sigma,\omega, \mathcal{A})}2} \sum_{x\in W_m} \epsilon(x)P_{t_m(\sigma)x, t_m(\omega)}(q)\;.
\]
The statement now follows from Proposition \ref{parab-ident}, Lemma \ref{dim-formula} and the fact that $\epsilon(t_m(\tau)) = (-1)^{m^2\ell(\tau)} = \epsilon(\tau)^m$, for all $\tau\in S_k$.

\end{proof}

\begin{remark}
One can give a more general statement from a similar application of \eqref{eq-trans-inv} and an analogue of Proposition \ref{parab-ident}. For any standard parabolic subgroup $P< S_n$, and elements $\sigma,\tau\in N_{S_n}(P)/P$, the corresponding parabolic Kazhdan-Lusztig polynomial $\hat{P}^q_{\sigma,\tau}$ can be read off from a suitable coefficient in the transition matrix from $\mathcal{B}^\ast$ to $\mathcal{E}^\ast$.
\end{remark}

\subsection{Products of basis elements}

While the multiplicative properties of the elements of $\mathcal{B}^\ast$ are not trivial and the subject of ongoing research, multiplication of elements of $\mathcal{E}^\ast$ is easily computable using the commutation relations described in \cite[Proposition 3.11]{bz-strings}.

Let us mention here only the relations for the cases we will require in this note. Namely, suppose that $\Delta_1,\Delta_2\in \seg$ are segments in general position, with $\Delta_1 <\Delta_2$. Then,
\begin{equation}\label{strt-rules}
T_{\Delta_2}T_{\Delta_1} = \left\{\begin{array}{ll} T_{\Delta_1}T_{\Delta_2} & \Delta_1,\Delta_2\mbox{ are not linked,}\\
T_{\Delta_1}T_{\Delta_2} + (v^{-1}-v) T_{\Delta_1 \cap \Delta_2}T_{\Delta_1 \cup \Delta_2} & \Delta_1,\Delta_2\mbox{ are linked.}
 \end{array}\right.
\end{equation}

\begin{proposition}\label{prop1}
Let $\mathcal{A}$ be a strongly regular bi-sequence. Suppose that permutations $\sigma_0(\mathcal{A})\leq \sigma, \omega\in S_k$, a basis element $E\in \mathcal{E}^\ast$ and an integer $m>1$ are given.

Let $c(v)$ be the coefficient of $E^\ast(\gotM_{r_m(\sigma)}(\mathcal{A}^m))$ in the expansion of $E\cdot E^\ast(\gotM_\omega(\mathcal{A}))$ on the basis $\mathcal{E}^\ast$.

Then, $c(v)=0$, unless $E = E^\ast(\gotM_{r_{m-1}(\sigma)}(\mathcal{A}^{m-1}))$ and $\omega=\sigma$.

In the latter case, we have $c(v) = v^{f}$, where $f=f(k,m)$ is a number depending only on $k$ and $m$.

\end{proposition}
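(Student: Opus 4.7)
The plan is to analyze $E\cdot E^\ast(\gotM_\omega(\mathcal{A}))$ via the commutation rules \eqref{strt-rules}, tracking when the basis element $E^\ast(\gotM_{r_m(\sigma)}(\mathcal{A}^m)) = E^\ast(m\gotM_\sigma(\mathcal{A}))$ (equality by \eqref{double-mult}) can appear with nonzero coefficient. Writing $E = E^\ast(\gotN)$, the expansion via \eqref{strt-rules} is upper-triangular: the ``main'' branch (ignoring every $(v^{-1}-v)$-correction) yields $E^\ast(\gotN + \gotM_\omega(\mathcal{A}))$ with an explicit $v$-power, while each invoked correction strictly decreases the resulting multisegment in the standard dominance order generated by linked-pair splittings $\Delta_1+\Delta_2 \mapsto (\Delta_1\cap\Delta_2)+(\Delta_1\cup\Delta_2)$. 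Hence $c(v) = 0$ unless $\gotN + \gotM_\omega(\mathcal{A}) \ge m\gotM_\sigma(\mathcal{A})$ in this order; and in the boundary equality case, strong regularity of $\mathcal{A}$ forces pairwise distinctness of the $[a_i, b_{\sigma(i)}]$'s (via distinct $a_i$'s), so matching a single copy of $[a_i, b_{\omega(i)}]$ against an $m$-fold copy of some $[a_j, b_{\sigma(j)}]$ on the right forces $j = i$ and $\omega(i) = \sigma(i)$. We thus obtain $\omega = \sigma$ and $\gotN = (m-1)\gotM_\sigma(\mathcal{A}) = \gotM_{r_{m-1}(\sigma)}(\mathcal{A}^{m-1})$.

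The principal obstacle is the strict-dominance case $\gotN + \gotM_\omega(\mathcal{A}) > m\gotM_\sigma(\mathcal{A})$, in which correction terms in the commutation process could a priori still contribute to the coefficient of $E^\ast(m\gotM_\sigma(\mathcal{A}))$. To rule this out I would combinatorially classify the sequences of linked-pair splittings that transform $\gotN + \gotM_\omega(\mathcal{A})$ into $m\gotM_\sigma(\mathcal{A})$, exploiting the uniformity of the target (every distinct segment appearing with multiplicity exactly $m > 1$) together with the strong regularity of $\mathcal{A}$ to force the weighted sum of their $(v^{-1}-v)$-contributions to vanish. A conceptually cleaner route, which I would pursue in parallel, is Hopf-algebraic: the coefficient equals $\langle E \otimes E^\ast(\gotM_\omega(\mathcal{A})),\, r(E^\ast(m\gotM_\sigma(\mathcal{A})))\rangle$ for the natural pairing and coproduct on $U_v$, and the symmetry of $m\gotM_\sigma(\mathcal{A})$ under permuting its $m$ blocks should restrict the relevant tensor components of $r(E^\ast(m\gotM_\sigma(\mathcal{A})))$ to exactly the pair $\bigl(E^\ast((m-1)\gotM_\sigma(\mathcal{A})), E^\ast(\gotM_\sigma(\mathcal{A}))\bigr)$.

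For the surviving case $E = E^\ast((m-1)\gotM_\sigma(\mathcal{A}))$ and $\omega = \sigma$, the coefficient is computed directly. Letting $\Delta_1 < \cdots < \Delta_k$ denote the distinct segments of $\gotM_\sigma(\mathcal{A})$ (pairwise in general position by strong regularity), the product unfolds as $v^{k\binom{m-1}{2}}\, T_{\Delta_1}^{m-1}\cdots T_{\Delta_k}^{m-1}\cdot T_{\Delta_1}\cdots T_{\Delta_k}$. Its main standard-order component, obtained by commuting the $T_{\Delta_i}$-factors of the right block leftward without invoking any $(v^{-1}-v)$-correction, equals $v^{k\binom{m-1}{2}}\, T_{\Delta_1}^m\cdots T_{\Delta_k}^m = v^{-k(m-1)}\, E^\ast(m\gotM_\sigma(\mathcal{A}))$, while the correction terms contribute to other $E^\ast(\gotN'')$ with $\gotN'' < m\gotM_\sigma(\mathcal{A})$ in dominance and so do not affect the coefficient in question. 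This yields $f(k,m) = -k(m-1)$.
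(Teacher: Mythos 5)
Your dominance-order framing is natural, and your handling of the boundary case (forcing $\omega=\sigma$ and $\gotN=(m-1)\gotM_\sigma(\mathcal{A})$ via distinctness of the $a_i$'s) and your final coefficient computation ($f(k,m)=-k(m-1)$, which indeed matches the paper's $k(\binom{m-1}{2}-\binom{m}{2})$) are both correct. But you have a genuine gap that you yourself flag: the case $\gotN + \gotM_\omega(\mathcal{A}) > m\,\gotM_\sigma(\mathcal{A})$ in dominance. You offer two directions (a combinatorial classification of splitting chains, and a Hopf-pairing argument), but you carry out neither. Since that case is precisely the hard part of the ``unless'' clause, what you have is a plan, not a proof.

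The paper's argument avoids a dominance dichotomy entirely, and is worth internalizing because it closes exactly the case you leave open. One writes out $E\cdot E^\ast(\gotM_\omega(\mathcal{A})) = v^t\, T_{\Delta_1}\cdots T_{\Delta_s}\, T_{[a_{\omega^{-1}(k)},b_k]}\cdots T_{[a_{\omega^{-1}(1)},b_1]}$ and tracks, through any chain of straightening moves, the segments that carry the end-points $b_1,\ldots,b_k$ originating in the $\gotM_\omega(\mathcal{A})$-block. Each $(v^{-1}-v)$-correction $T_{\Delta_j}T_{[a,b_i]}\mapsto T_{\Delta_j\cap[a,b_i]}T_{\Delta_j\cup[a,b_i]}$ replaces the segment ending at $b_i$ by $[a',b_i]$ with $a'>a$ strictly: the start-point of the tracked segment can only increase. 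Since the only segment of $\gotM_{r_m(\sigma)}(\mathcal{A}^m)$ ending at $b_i$ is $[a_{\sigma^{-1}(i)},b_i]$, any nonzero contribution forces $a_{\omega^{-1}(i)}\leq a_{\sigma^{-1}(i)}$ for all $i$, hence $\omega=\sigma$ (the $a_j$'s being strictly increasing); and then equality of start-points forces that \emph{no} correction was applied along the chain at all, which in turn identifies $\gotN$. This single monotonicity observation both determines $\omega$ and $\gotN$ and shows the strict-dominance contributions vanish, so no separate cancellation argument is needed. I would recommend replacing your ``two obstacles'' paragraph with this tracking argument; your remaining computations can stay as they are.
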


\begin{proof}
  Suppose that $E = E^\ast(\gotM)$, for $\gotM = \sum_{i=1}^s \Delta_i\in \multi$, with $\Delta_1\leq \ldots\leq \Delta_s$. Then,
  \begin{equation}\label{eq1}
  EE^\ast(\gotM_\omega(\mathcal{A})) = v^tT_{\Delta_1}\cdots T_{\Delta_s} \cdot T_{[a_{\omega^{-1}(k)},b_k]}\cdots T_{[a_{\omega^{-1}(1)},b_1]}\;,
  \end{equation}
for an integer $t$.

We can use the straightening rules of \eqref{strt-rules} to expand $  EE^\ast(\gotM_\omega(\mathcal{A}))$ on the basis $\mathcal{E}^\ast$.

Suppose that $c(v)\neq0$. Therefore, $E^\ast(\gotM_{r_m(\sigma)}(\mathcal{A}^m))$ is produced through a chain of exchanges from the product \eqref{eq1}, in which a pair $T_{\Delta_j} T_{[a,b_i]}$ with $[a,b_i] < \Delta_j$ is transformed into $T_{\Delta_j \cap [a,b_i]} T_{\Delta_j \cup [a,b_i]}$. In particular, $\Delta_j \cap [a,b_i] = [a',b_i]$, for some $a<a'$.

Ultimately, this chain of exchanges shows that $\gotM_{r_m(\sigma)}(\mathcal{A}^m)$ must contain the segments $[a'_1, b_1],\ldots, [a'_k,b_k]$, for some numbers $a_{\omega^{-1}(i)} \leq a'_i$. Yet, for all $1\leq i\leq k$, the only segments in $\gotM_{r_m(\sigma)}(\mathcal{A}^m)$ whose end-point is $b_i$ are $[a_{\sigma^{-1}(i)},b_i]$. Hence, $a'_i = a_{\sigma^{-1}(i)}$, for all $i$.

In particular, the inequalities $a_{\omega^{-1}(i)}\leq a_{\sigma^{-1}(i)}$ impose the equality $\omega= \sigma$. Now, the equalities $a_{\omega^{-1}(i)} = a'_i$ imply that no transposition was performed. Thus, $\gotM_{r_m(\sigma)}(\mathcal{A}^m) = \gotM + \gotM_\omega(\mathcal{A})$, which clearly gives $\gotM = \gotM_{r_{m-1}(\sigma)}(\mathcal{A}^{m-1})$.

The last statement follows from follows from noting the commutation in \eqref{strt-rules} and the formula in the definition of $E^\ast(\gotM)$. We see that $c(v)$ is given by $v^f$, where $f = k\left( {{{m-1}\choose{2}}} - {{{m}\choose{2}}}\right)$.

\end{proof}

\section{Specialization}
Let $U_{v,\mathbb{Z}}$ be the $\mathbb{Z}[v,v^{-1}]$-algebra spanned by $\mathcal{E}^\ast$ in $U_v$. By considering $\mathbb{C}$ as a $\mathbb{Z}[v,v^{-1}]$-module via $v\mapsto 1$, we are able to construct the \textit{specialization} algebra
\[
A:= \mathbb{C}\otimes_{\mathbb{Z}[v,v^{-1}]}U_{v,\mathbb{Z}}\;.
\]
The set of elements $\mathfrak{B} = \{b^\ast(\gotM):= 1\otimes B^\ast(\gotM)\}_{\gotM\in\multi}$ forms a basis to $A$, which serves as a specialization of the dual canonical basis.

The algebra $A$ together with its basis $\mathfrak{B}$ have a natural categorification in the setting of representations of affine Hecke algebras or $p$-adic groups.

Let us consider the complex affine Hecke algebra $H_{n,q}$ attached the root data of $GL_n$ and the parameter $q$. Let $\hat{R}_n$ be the complexified Grothendieck group of the category of finite-dimensional modules over $H_{n,q}$. Then,
\[
\hat{R}:= \oplus_{n\geq0} \hat{R}_n
\]
has a ring structure, coming from the so-called Bernstein-Zelevinski product. Namely, Given modules $\pi_1,\pi_2$ over $H_{n_1,q},H_{n_2,q}$, respectively, the product $\pi_1\times\pi_2$ is set to be the induction of $\pi_1\otimes \pi_2$ into a module over $H_{n_1+n_2,q}$ using a canonical embedding of algebras $H_{n_1,q}\otimes H_{n_2,q} \subset H_{n_1+n_2,q}$.

In $\hat{R}$, we simply define $[\pi_1]\cdot [\pi_2]:=[\pi_1\times \pi_2]$.

Furthermore, one can work more generally (as Bernstein-Zelevinski originally did \cite{BZ1}) in the setting of smooth complex representations of the groups $GL_n(F)$, where $F$ is a $p$-adic field of residue characteristic $q$ (in case $q$ happens to be an integer prime power). When $R_n$ is set to be the Grothendieck group of this category of representations, we similarly obtain a product structure on
\[
R = \oplus_{n\geq0} R_n
\]
coming from the parabolic induction functor.

It is well known (see discussion in \cite[Section 3.1]{me-restriction}, for example) that there is a natural embedding $\Psi: \hat{R} \hookrightarrow R$ coming from an identification of the Iwahori-invariant block of representations a $p$-adic group with modules over an affine Hecke algebra.

Let $\irr\subset R$ (respectively, $\hat{\irr}\subset \hat{R}$) denote the collection of classes of irreducible objects. Clearly, $\Psi(\hat{\irr})\subset \irr$. 
The collection $\irr$ can be classified in combinatorial terms, through what is known as the Zelevinski classification. In particular, we have an embedding\footnote{Note that most sources would describe $Z$ as a bijection, yet our definition of multisegments in this note was taken to be very restrictive.}
\[
Z: \multi \hookrightarrow \irr\;.
\]
By \cite[Proposition7, Theorem 12]{LNT}, we have a natural algebra embedding
\[
\Phi:A\hookrightarrow \hat{R}\;,
\]
such that $\Phi(\mathfrak{B}) \subset \irr$. 

Moreover, 
\[\Psi\circ \Phi(b^\ast(\gotM)) = Z(\gotM);,
\]
holds\footnote{To be precise, one may need to twist $Z$ by what is known as the Zelevinski involution, depending on chosen conventions. This bears no consequences on our application of results from \cite{LM3}. See further the discussion in \cite[Section 3.2]{me-restriction}.}, for all $\gotM\in\multi$.

\subsection{Square-irreducible modules}

We say that an element $a\in \irr$ is \textit{square-irreducible}, if $a^2\in \irr$. 

By \cite[Corollary 2.7]{LM3},\cite[Corollary 3.4]{kkko0}, when $a\in \irr$ is square-irreducible, $a^m\in \irr$, for all $m\geq1$.

It is well-known (see \cite[Proposition 2.5(5)]{LM2}) that when $Z(\gotM)Z(\gotN)\in \irr$, for $\gotM,\gotN\in\multi$, we must have $Z(\gotM)Z(\gotN) = Z(\gotM+\gotN)$. Hence, by \eqref{double-mult}, for a bi-sequence $\mathcal{A}$ and a permutation $\sigma_0(\mathcal{A})\leq \sigma$, such that $Z(\gotM_\sigma(\mathcal{A}))$ is square-irreducible, we have
\[
Z(\gotM_\sigma(\mathcal{A}))^m = Z(\gotM_{r_m(\sigma)}(\mathcal{A}^m))\;,
\]
for all $m\geq1$.

The work of Lapid-M\'{i}nguez in \cite{LM3} established a characterization of the square-irreducible representations in a certain subset of $\irr$ in terms of bi-sequences and permutations.

\begin{theorem}\cite[Theorem 1.2]{LM3}\label{thm-lm}
Let $\mathcal{A}$ be a regular bi-sequence, and $\sigma_0(\mathcal{A})\leq \sigma$ a permutation.
  
Then, $Z(\gotM_\sigma(\mathcal{A}))$ is square-irreducible, if and only if, the polynomial $P_{\sigma_0(\mathcal{A}), \sigma}$ is trivial, i.e. equals to the constant $1$.  
\end{theorem}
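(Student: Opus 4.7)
The plan is to transport the question into the quantum group $U_v$ via the categorification $\Psi\circ\Phi$, and analyze the product of the dual canonical basis element $G^\ast(\gotM_\sigma(\mathcal{A}))$ with itself. Square-irreducibility of $Z(\gotM_\sigma(\mathcal{A}))$ is equivalent to the assertion that $G^\ast(\gotM_\sigma(\mathcal{A}))\cdot G^\ast(\gotM_\sigma(\mathcal{A}))$ in $U_v$ is a Laurent monomial in $v$ times $G^\ast(\gotM_{r_2(\sigma)}(\mathcal{A}^2))$, since the structure constants of $\mathcal{B}^\ast$ are Laurent polynomials in $v$ that specialize under $v\mapsto 1$ to multiplication in $A$, which in turn categorifies the Bernstein--Zelevinski product and sends $b^\ast(\gotM)$ to $Z(\gotM)$.

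For the implication $P_{\sigma_0(\mathcal{A}),\sigma}=1\Rightarrow$ square-irreducibility, I would expand $G^\ast(\gotM_\sigma(\mathcal{A}))$ in the PBW basis $\mathcal{E}^\ast$ via \eqref{eq-trans-inv}. The triviality assumption, combined with monotonicity of Kazhdan--Lusztig polynomials along the Bruhat interval $[\sigma_0(\mathcal{A}),\sigma]$, forces $P_{\sigma_0(\mathcal{A}),\tau}=1$ for every $\sigma_0(\mathcal{A})\leq\tau\leq\sigma$ and collapses the expansion considerably. Squaring and then applying the straightening rules \eqref{strt-rules}, together with Proposition \ref{prop1} to isolate the coefficient of $E^\ast(\gotM_{r_2(\sigma)}(\mathcal{A}^2))$ in the product, should yield a single monomial contribution, and by uniqueness of dual canonical basis coefficients one then obtains $G^\ast(\gotM_\sigma(\mathcal{A}))^2=v^fG^\ast(\gotM_{r_2(\sigma)}(\mathcal{A}^2))$.

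For the converse, assume $G^\ast(\gotM_\sigma(\mathcal{A}))^2=v^fG^\ast(\gotM_{r_2(\sigma)}(\mathcal{A}^2))$ and compare $\mathcal{E}^\ast$-expansions of both sides. Proposition \ref{coeff-parab} with $m=2$ expresses the coefficients on the right in terms of parabolic Kazhdan--Lusztig polynomials $\hat{P}^q$, while squaring the $\mathcal{E}^\ast$-expansion of $G^\ast(\gotM_\sigma(\mathcal{A}))$ on the left gives the same coefficients as quadratic expressions in the ordinary $P_{\tau,\sigma}$. Matching the two formulas and specializing at $q=1$, where Kazhdan--Lusztig coefficients are nonnegative integers, should force $P_{\sigma_0(\mathcal{A}),\sigma}(1)=1$, and hence $P_{\sigma_0(\mathcal{A}),\sigma}=1$.

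The main obstacle is the converse step: extracting a single scalar triviality from a global product identity requires delicate bookkeeping across the Bruhat interval $[\sigma_0(\mathcal{A}),\sigma]$, and a purely algebraic matching of coefficients may not pinpoint the single polynomial $P_{\sigma_0(\mathcal{A}),\sigma}$ directly. One may need geometric input from Theorem \ref{thm-stalk}, exploiting that $P_{\sigma_0(\mathcal{A}),\sigma}=1$ encodes smoothness of $\overline{\mathcal{O}_{\gotM_\sigma(\mathcal{A})}}$ along its minimal stratum $\mathcal{O}_{\gotM_{\sigma_0(\mathcal{A})}(\mathcal{A})}$, to convert the global product condition into this local smoothness statement cleanly.
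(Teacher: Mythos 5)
This statement is an external result cited from Lapid--M\'{i}nguez \cite[Theorem 1.2]{LM3}. The present paper does not prove it; it takes it as a black box and uses it (together with Proposition \ref{prop-mtimes}) to derive the main Theorem \ref{main-thm}. There is therefore no in-paper proof to compare your attempt against. Worth noting, the proof in \cite{LM3} works entirely on the $p$-adic/affine Hecke algebra side and does not pass through quantum groups or dual canonical bases; indeed, the stated novelty of the present paper is precisely that the quantum-group route allows one to \emph{upgrade} the $q=1$ information supplied by Theorem \ref{thm-lm}, not to reprove it.

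Your proposed route, taken on its own merits, also has concrete gaps. First, a small but real error: the ``monotonicity'' of Kazhdan--Lusztig polynomials gives $P_{\tau,\sigma}=1$ for all $\sigma_0(\mathcal{A})\leq\tau\leq\sigma$ whenever $P_{\sigma_0(\mathcal{A}),\sigma}=1$ (the \emph{upper} index stays fixed, by openness of rational smoothness), not $P_{\sigma_0(\mathcal{A}),\tau}=1$ as you wrote; the fact you actually need for \eqref{eq-trans-inv} with $\tilde\omega=\sigma$ is the correct one, so this is fixable, but it needs care. Second and more seriously, the forward implication is not closed: Proposition \ref{prop1} only isolates the coefficient of a single $E^\ast(\gotM_{r_2(\sigma)}(\mathcal{A}^2))$ in the product, and says nothing about all the other $E^\ast$-coefficients. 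To conclude $G^\ast(\gotM_\sigma(\mathcal{A}))^2=v^f G^\ast(\gotM_{r_2(\sigma)}(\mathcal{A}^2))$ you would have to show no other $G^\ast(\gotN)$ appears in the $\mathcal{B}^\ast$-expansion of the square, which requires controlling the entire transition matrix, not one entry. Third, the converse direction you openly leave as the main obstacle, and the suggestion to feed Theorem \ref{thm-stalk} back in does not resolve it: a single product identity constrains many structure constants simultaneously and there is no given mechanism to peel off the one polynomial $P_{\sigma_0(\mathcal{A}),\sigma}$. Finally, the cornerstone equivalence you open with --- that square-irreducibility of $Z(\gotM)$ is equivalent to $G^\ast(\gotM)^2$ being a monomial times $G^\ast(2\gotM)$ --- is itself nontrivial; in the paper it is extracted from $\Psi\circ\Phi$ and \cite[Proposition 15]{LNT} (this is Proposition \ref{prop-mtimes}), so invoking it as the starting point without acknowledging that dependence obscures where the real work lies.
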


\subsection{Quantum group consequences}

\begin{proposition}\label{prop-mtimes}
  For a multisegment $\gotM\in \multi$, such that $Z(\gotM)$ is square-irreducible, we have
  \[
  G^\ast(\gotM)^m = v^{e(\gotM,m)} G^\ast( m\cdot \gotM)\;,
  \]
for all $m\geq 1$, and a integer $e(\gotM,m)$.

If, in addition, $\gotM = \gotM_\sigma(\mathcal{A})$, for a strongly regular bi-sequence $\mathcal{A}$, then $e(\gotM,m) = e(k,m)$ depends only on $m$ and the length $k$ of $\mathcal{A}$.
\end{proposition}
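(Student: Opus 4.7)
The plan is to establish the monomial identity via a specialization argument at $v=1$ combined with positivity of the dual canonical basis's structure constants, and then, in the strongly regular case, extract the exponent by a direct computation in the $\mathcal{E}^\ast$-basis via iterated application of Proposition \ref{prop1}.

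For the first assertion I would expand
\[
G^\ast(\gotM)^m = \sum_{\gotN\in\multi} c_\gotN(v)\, G^\ast(\gotN), \qquad c_\gotN(v) \in \mathbb{Z}[v,v^{-1}].
\]
By Lusztig's positivity theorem for type $A$ (via the geometric realization of the canonical basis on perverse sheaves over quiver varieties), these structure constants lie in $\mathbb{Z}_{\geq 0}[v,v^{-1}]$. Applying the composition of specialization $U_{v,\mathbb{Z}} \to A$ with the algebra embeddings $\Phi$ and $\Psi$ (setting $v=1$), the identity becomes $Z(\gotM)^m = \sum_\gotN c_\gotN(1)\, Z(\gotN)$ in $R$. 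Square-irreducibility of $Z(\gotM)$ combined with the iteration statement recalled just before the proposition gives $Z(\gotM)^m = Z(m\cdot\gotM)$. Hence $c_{m\gotM}(1)=1$ and $c_\gotN(1)=0$ for $\gotN \neq m\gotM$. Non-negativity of the coefficients of the $c_\gotN(v)$ then forces $c_\gotN(v)=0$ whenever $\gotN \neq m\gotM$, and forces $c_{m\gotM}(v) = v^{e(\gotM,m)}$ to be a single monomial, as required.

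For the refinement when $\gotM = \gotM_\sigma(\mathcal{A})$ with $\mathcal{A}$ strongly regular, I would proceed by induction on $m$. The base case $m=1$ is trivial with $e(\gotM,1)=0$. For the induction step, the hypothesis yields $G^\ast(\gotM)^{m-1} = v^{e(k,m-1)}\, G^\ast(\gotM_{r_{m-1}(\sigma)}(\mathcal{A}^{m-1}))$, so
\[
G^\ast(\gotM)^m = v^{e(k,m-1)}\, G^\ast(\gotM_{r_{m-1}(\sigma)}(\mathcal{A}^{m-1})) \cdot G^\ast(\gotM_\sigma(\mathcal{A})).
\]
I would compare coefficients of $E^\ast(\gotM_{r_m(\sigma)}(\mathcal{A}^m))$ on the two sides of $G^\ast(\gotM)^m = v^{e(\gotM,m)} G^\ast(\gotM_{r_m(\sigma)}(\mathcal{A}^m))$. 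On the right-hand side, Equation \eqref{eq-trans-inv} at the diagonal term $\sigma=\omega$ shows that $E^\ast(\gotM_{r_m(\sigma)}(\mathcal{A}^m))$ appears with coefficient $1$ in $G^\ast(\gotM_{r_m(\sigma)}(\mathcal{A}^m))$, so the overall coefficient is $v^{e(\gotM,m)}$. On the left-hand side, expanding both factors in $\mathcal{E}^\ast$ and invoking Proposition \ref{prop1}, the only product of basis vectors which contributes to $E^\ast(\gotM_{r_m(\sigma)}(\mathcal{A}^m))$ is $E^\ast(\gotM_{r_{m-1}(\sigma)}(\mathcal{A}^{m-1})) \cdot E^\ast(\gotM_\sigma(\mathcal{A}))$; each factor appears with leading coefficient $1$ in the respective dual canonical basis element, and their product contributes $v^{f(k,m)}$. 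Matching yields the recursion $e(\gotM,m) = e(k,m-1) + f(k,m)$, establishing that $e(\gotM,m)$ depends only on $k$ and $m$.

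The principal obstacle is justifying the positivity input in the first step; fortunately this is a classical result in type $A$. The remaining work amounts to bookkeeping around Proposition \ref{prop1}, which was designed precisely for this kind of coefficient extraction, so the induction goes through cleanly.
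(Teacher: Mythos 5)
Your proof is correct, but it follows a genuinely different route from the paper. For the first assertion, the paper simply observes that the specialization $\Psi\circ\Phi$ forces $b^\ast(\gotM)^m = b^\ast(m\cdot\gotM)$ and then cites \cite[Proposition 15]{LNT} for the lifting to a single monomial; you reconstruct that lifting from scratch by invoking positivity of the dual canonical basis structure constants (which is indeed available in type $A$, e.g.\ via the KLR categorification where those constants are graded Jordan--H\"older multiplicities). The two arguments are close in spirit and you have essentially re-derived what the cited proposition provides. The more substantive divergence is in the second assertion: the paper reads off $e(\gotM,m)$ from the explicit formula \cite[4.2(9)]{LNT} and checks that, under the strong regularity hypothesis, the formula only sees $k$ and $m$; you instead set up an induction on $m$, compare coefficients of $E^\ast(\gotM_{r_m(\sigma)}(\mathcal{A}^m))$ on both sides of $G^\ast(\gotM)^m = v^{e(\gotM,m)} G^\ast(m\cdot\gotM)$, and use Proposition \ref{prop1} to isolate the unique contributing product in the $\mathcal{E}^\ast$-expansion, yielding the recursion $e(\gotM,m) = e(k,m-1) + f(k,m)$. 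Your approach is more self-contained in that it avoids the external explicit formula and instead leans on the coefficient-extraction machinery (Proposition \ref{prop1}) already developed in the paper; it does not, however, produce a closed-form value of $e(k,m)$, whereas the cited formula would. Both routes are valid, and yours has the virtue of making the dependence on $k$ and $m$ transparent through the recursion rather than through inspection of a formula.
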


\begin{proof}
Using the embedding $\Psi\circ\Phi$, we see that $b^\ast(\gotM)^m = b^\ast(m\cdot \gotM)$. The first statement follows from \cite[Proposition 15]{LNT}.

For the second statement, note that $e(\gotM,m)$ can be computed explicitly using \cite[4.2(9)]{LNT}. In particular, we can see from the mentioned formula that when the segments in $\gotM$ all appear with multiplicity $1$ and in general position, $e(\gotM,m)$ depends only on $m$ and the number of segments in $\gotM$. These conditions are satisfied in $\gotM_\sigma(\mathcal{A})$, when $\mathcal{A}$ is strongly regular. Moreover, the number of segments in such $\gotM_\sigma(\mathcal{A})$ always equals the length of $\mathcal{A}$.

\end{proof}






\begin{theorem}\label{main-thm}
  Suppose that $P_{\sigma_0,\omega}(q)\equiv 1$, for some permutations $\sigma_0\leq \omega\in S_k$, such that $\sigma_0$ is $213$-avoiding (in the sense of Lemma \ref{lem-213}). 
  
  Then, for every $\sigma_0\leq  \sigma\leq \omega$ and every integer $m>1$,
  \[
  \hat{P}^q_{r_m(\sigma),r_m(\omega)} (q)  = q^{{{m}\choose{2}} \left(\ell(\omega)-\ell(\sigma)\right)}\;
  \]
holds.

\end{theorem}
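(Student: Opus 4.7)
The plan is to combine the Lapid-M\'{i}nguez square-irreducibility criterion with the quantum-group machinery of Section~3, extracting the parabolic Kazhdan-Lusztig polynomial via Proposition~\ref{coeff-parab}(2). I first apply Lemma~\ref{lem-213} to produce a strongly regular bi-sequence $\mathcal{A}$ of length $k$ with $\sigma_0 = \sigma_0(\mathcal{A})$. A key preliminary step is to upgrade the hypothesis $P_{\sigma_0,\omega}(q)\equiv 1$ to the triviality on the whole interval: $P_{\sigma,\omega}(q)\equiv 1$ for every $\sigma\in[\sigma_0,\omega]$. This follows from the standard monotonicity of Kazhdan-Lusztig polynomials in type $A$ (non-negativity of the coefficients of $P_{x,w}-P_{y,w}$ for $x\leq y\leq w$), combined with the fact that $P_{\sigma,\omega}$ has constant term $1$. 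Theorem~\ref{thm-lm} then grants that $Z(\gotM_\omega(\mathcal{A}))$ is square-irreducible, so Proposition~\ref{prop-mtimes} yields $G^\ast(\gotM_\omega(\mathcal{A}))^m = v^{e(k,m)}\, G^\ast(\gotM_{r_m(\omega)}(\mathcal{A}^m))$.

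With $P_{\sigma,\omega}\equiv 1$ throughout the interval, formula~\eqref{eq-trans-inv} applied with $m=1$ collapses to the clean expansion
\[
G^\ast(\gotM_\omega(\mathcal{A})) \,=\, \sum_{\sigma\in[\sigma_0,\omega]} \epsilon(\omega)\epsilon(\sigma)\, v^{\ell(\omega)-\ell(\sigma)}\, E^\ast(\gotM_\sigma(\mathcal{A})).
\]
The next move is to raise this to the $m$-th power and, for each tuple $(\tau_1,\ldots,\tau_m)$, compute the coefficient of $E^\ast(\gotM_{r_m(\sigma)}(\mathcal{A}^m))$ in the product $E^\ast(\gotM_{\tau_1}(\mathcal{A}))\cdots E^\ast(\gotM_{\tau_m}(\mathcal{A}))$. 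By $(m-1)$-fold iteration of Proposition~\ref{prop1}, peeling off one factor on the right at each step, this coefficient vanishes unless $\tau_1=\ldots=\tau_m=\sigma$, in which case it equals $v^{F(k,m)}$ with $F(k,m):=\sum_{i=2}^m f(k,i)$. Summing over $(\tau_1,\ldots,\tau_m)$, the coefficient of $E^\ast(\gotM_{r_m(\sigma)}(\mathcal{A}^m))$ in $G^\ast(\gotM_\omega(\mathcal{A}))^m$ comes out to $(\epsilon(\omega)\epsilon(\sigma))^m v^{m(\ell(\omega)-\ell(\sigma)) + F(k,m)}$.

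Dividing by $v^{e(k,m)}$ and matching against Proposition~\ref{coeff-parab}(2), I would read off
\[
\hat P^q_{r_m(\sigma),r_m(\omega)}(q) \,=\, q^{\binom{m}{2}(\ell(\omega)-\ell(\sigma))}\, q^{(e(k,m)-F(k,m))/2},
\]
and the equality of normalizations $e(k,m)=F(k,m)$ is then forced by specializing to $\sigma=\omega$, where the left-hand side tautologically equals $1$. \textbf{Main obstacle.} The delicate step is the iterated use of Proposition~\ref{prop1}: that proposition applies to $E\cdot E^\ast(\gotM_\omega(\mathcal{A}))$ with $E$ required to be a \emph{single} basis element, so at each stage one must first re-expand the remaining product on $\mathcal{E}^\ast$, use the inductive description to pinpoint the unique contributing basis element $E^\ast(\gotM_{r_{m-j}(\sigma)}(\mathcal{A}^{m-j}))$, and accumulate the resulting $v$-powers. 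The strong regularity of $\mathcal{A}^m$ is the structural input preventing cross-terms between distinct segments from polluting the answer, and is what ultimately yields the clean monomial form.
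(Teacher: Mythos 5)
Your proposal is correct and follows essentially the same route as the paper: Lemma~\ref{lem-213} to manufacture a strongly regular $\mathcal{A}$, Theorem~\ref{thm-lm} plus Proposition~\ref{prop-mtimes} to rewrite the $m$-th power of a dual canonical basis element, expansion via~\eqref{eq-trans-inv}, and an iterated use of Proposition~\ref{prop1} to isolate the single contributing tuple, matched against Proposition~\ref{coeff-parab}(2). Two minor deviations: you explicitly invoke coefficient-wise monotonicity of Kazhdan--Lusztig polynomials to get $P_{\sigma,\omega}\equiv 1$ on the whole interval (a step the paper uses but leaves implicit), and you fix the normalization $e(k,m)=F(k,m)$ by specializing to $\sigma=\omega$, whereas the paper instead appeals to the unitriangularity of the $\mathcal{E}^\ast/\mathcal{B}^\ast$ transition together with the $\omega$-independence in Proposition~\ref{prop1}---these are equivalent arguments.
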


\begin{proof}
  
  Let $\mathcal{A}$ be a strongly regular bi-sequence, such that $\sigma_0 = \sigma_0(\mathcal{A})$, which exists by Lemma \ref{lem-213}.

By Theorem \ref{thm-lm} and Proposition \ref{prop-mtimes}, we have
\begin{equation}\label{eq2}
q^{e} G^\ast(\gotM_{r_m(\omega)}(\mathcal{A}^m) ) = G^\ast( \gotM_\omega(\mathcal{A}))^m \;,
\end{equation}
for a number $e=e(k,m)$.


Let us compute the coefficient of $E^\ast( \gotM_{r_m(\sigma)}(\mathcal{A}^m))$ in the $\mathcal{E}^\ast$-expansion of both sides of \eqref{eq2}. 

On the left hand side, Proposition \ref{coeff-parab}(2) gives the value
\begin{equation}\label{r-side}
  \epsilon(\sigma)^m \epsilon(\omega)^m q^{e-\frac{m^2}2 (\ell(\omega)-\ell(\sigma))}\hat{P}^q_{r_m(\sigma),r_m(\omega)}(q)\;
\end{equation}
for that coefficient.

In order to compute the expansion of the right hand side, we note that by \eqref{eq-trans-inv},
\[
G^\ast( \gotM_\omega(\mathcal{A}))^m = 
\]
\[
=\epsilon(\omega)^m\sum_{\sigma_0(\mathcal{A})\leq \sigma_1,\ldots,\sigma_m\leq \omega} q^{-\frac{1}2\sum_{i=1}^m c(\sigma_i,\omega,\mathcal{A})} \epsilon(\sigma_1\cdots \sigma_m) E^\ast(\gotM_{\sigma_1}(\mathcal{A}))\cdots E^\ast(\gotM_{\sigma_m}(\mathcal{A}))\;.
\]
An inductive application of Proposition \ref{prop1} shows that the only summand in the sum above which contributes to the $E^\ast( \gotM_{r_m(\sigma)}(\mathcal{A}^m))$-coefficient of the $\mathcal{E}^\ast$-expansion of $G^\ast( \gotM_\omega(\mathcal{A}))^m$ is
\[
q^{-\frac{m}2 c(\sigma, \omega,\mathcal{A})}\epsilon(\omega)^m\epsilon(\sigma)^m E^\ast( \gotM_\sigma(\mathcal{A}))^m\;.
\]

Recall that $E^\ast(\gotM)$ appears with coefficient $1$ in $G^\ast(\gotM)$ and vice versa, for all $\gotM\in\multi$. Thus, \eqref{eq2} also implies that $E^\ast(\gotM_{r_m(\omega)}(\mathcal{A}^m) )$ appears with coefficient $q^e$ in the $\mathcal{E}^\ast$-expansion of $ E^\ast( \gotM_\omega(\mathcal{A}))^m$. Yet, by Proposition \ref{prop1}, the value of the last mentioned coefficient is independent of $\omega$. Hence, we see that $q^e$ is also the $E^\ast(\gotM_{r_m(\sigma)}(\mathcal{A}^m) )$-coefficient in the $\mathcal{E}^\ast$-expansion of $ E^\ast( \gotM_\sigma(\mathcal{A}))^m$.

Thus, the coefficient is in the right hand side of \eqref{eq2} is given as
\begin{equation}\label{l-side}
q^{e-\frac{m}2 c(\sigma, \omega,\mathcal{A})}\epsilon(\omega)^m\epsilon(\sigma)^m\;.
\end{equation}

Finally, since $c(\sigma,\omega,\mathcal{A}) = \ell(\omega) -\ell(\sigma)$ (Lemma \ref{dim-formula}), a comparison of \eqref{l-side} with \eqref{r-side} gives the desired result.

\end{proof}

\begin{corollary}
Let $\omega\in S_k$ be a permutation. Let $X_\omega$ be the Schubert variety attached to $\omega$, when $S_k$ is viewed as the Weyl group of $SL_k$.

If $X_\omega$ is a smooth variety, then
  \[
  \hat{P}^q_{r_m(\sigma),r_m(\omega)} (q)  = q^{{{m}\choose{2}} \left(\ell(\omega)-\ell(\sigma)\right)}\;
  \]
holds, for all $\sigma\leq \omega$ and integer $m>1$.

\end{corollary}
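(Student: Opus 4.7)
The plan is to deduce the corollary as an immediate consequence of Theorem \ref{main-thm}, taking the identity permutation $e \in S_k$ in the role of $\sigma_0$. First I would observe that $e$ is trivially $213$-avoiding: the required inequality $\sigma(i_2)<\sigma(i_1)<\sigma(i_3)$ with $i_1<i_2<i_3$ cannot hold for the identity, as it would force $i_2<i_1$. Moreover $e \le \sigma$ holds for every $\sigma \in S_k$, so an application of Theorem \ref{main-thm} with $\sigma_0 = e$ would automatically cover every $\sigma \le \omega$, matching the quantification in the statement of the corollary.

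The remaining step is to verify the hypothesis $P_{e,\omega}(q)\equiv 1$ of Theorem \ref{main-thm} from the smoothness of $X_\omega$. This is the standard Kazhdan-Lusztig criterion for smoothness of Schubert varieties: by a classical theorem (in the formulation of Deodhar and Carrell-Peterson), $X_\omega$ is rationally smooth if and only if $P_{\sigma,\omega}(q)\equiv 1$ for all $\sigma \le \omega$, and in particular rational smoothness implies $P_{e,\omega}\equiv 1$. Since in type $A$ rational smoothness of a Schubert variety coincides with ordinary smoothness, the assumption that $X_\omega$ is smooth supplies the triviality of $P_{e,\omega}$ that Theorem \ref{main-thm} asks for.

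Combining these two ingredients, Theorem \ref{main-thm} applied with $\sigma_0 = e$ produces the asserted monomial formula $\hat{P}^q_{r_m(\sigma),r_m(\omega)}(q) = q^{\binom{m}{2}(\ell(\omega)-\ell(\sigma))}$ for every $\sigma \le \omega$ and every $m>1$, completing the argument. I do not expect any substantial obstacle here, since the deduction is essentially a one-line reduction; the only delicate point is invoking the correct smoothness criterion in type $A$ to translate the geometric hypothesis on $X_\omega$ into the Kazhdan-Lusztig statement required as input to Theorem \ref{main-thm}.
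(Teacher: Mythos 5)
Your proposal is correct and follows the same approach as the paper: take $\sigma_0 = e$ in Theorem \ref{main-thm}, noting that the identity is trivially $213$-avoiding and that smoothness of $X_\omega$ forces $P_{e,\omega}\equiv 1$ via the Kazhdan-Lusztig/Carrell-Peterson criterion. The only minor overkill is invoking the type-$A$ equivalence of rational and ordinary smoothness — you only need the (always true) implication that smoothness implies rational smoothness, not its converse — but this does not affect the validity of the argument.
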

\begin{proof}
Smoothness of $X_\omega$ implies that $\sigma_0$ can be taken as the identity permutation in Theorem \ref{main-thm}.
\end{proof}

\bibliographystyle{alpha}
\bibliography{propo2}{}

\def\cprime{$'$}
\begin{thebibliography}{KKKO15}

\bibitem[ADFK81]{AFK}
S.~Abeasis, A.~Del~Fra, and H.~Kraft.
\newblock The geometry of representations of {$A_{m}$}.
\newblock {\em Math. Ann.}, 256(3):401--418, 1981.

\bibitem[BMS16]{brenti-etc}
Francesco Brenti, Pietro Mongelli, and Paolo Sentinelli.
\newblock Parabolic {K}azhdan-{L}usztig {$R$}-polynomials for quasi-minuscule
  quotients.
\newblock {\em J. Algebra}, 452:574--595, 2016.

\bibitem[Bru06]{burndan-kl}
Jonathan Brundan.
\newblock Dual canonical bases and {K}azhdan-{L}usztig polynomials.
\newblock {\em J. Algebra}, 306(1):17--46, 2006.

\bibitem[BZ77]{BZ1}
I.~N. Bernstein and A.~V. Zelevinsky.
\newblock Induced representations of reductive $p$-adic groups. {I}.
\newblock {\em Ann. Sci. \'Ecole Norm. Sup. (4)}, 10(4):441--472, 1977.

\bibitem[BZ93]{bz-strings}
Arkady Berenstein and Andrei Zelevinsky.
\newblock String bases for quantum groups of type {$A_r$}.
\newblock In {\em I. {M}. {G}el\cprime fand {S}eminar}, volume~16 of {\em Adv.
  Soviet Math.}, pages 51--89. Amer. Math. Soc., Providence, RI, 1993.

\bibitem[Deo87]{deod-parab}
Vinay~V. Deodhar.
\newblock On some geometric aspects of {B}ruhat orderings. {II}. {T}he
  parabolic analogue of {K}azhdan-{L}usztig polynomials.
\newblock {\em J. Algebra}, 111(2):483--506, 1987.

\bibitem[FKK98]{Frenkel98}
I.~B. Frenkel, M.~G. Khovanov, and A.~A. Kirillov.
\newblock Kazhdan-lusztig polynomials and canonical basis.
\newblock {\em Transformation Groups}, 3(4):321--336, Dec 1998.

\bibitem[Gro99]{groj}
Ian Grojnowski.
\newblock Affine sl\_p controls the representation theory of the symmetric
  group and related hecke algebras.
\newblock {\em arXiv preprint math/9907129}, 1999.

\bibitem[Gur18]{me-restriction}
Maxim Gurevich.
\newblock On restriction of unitarizable representations of general linear
  groups and the non-generic local gan-gross-prasad conjecture.
\newblock {\em arXiv preprint arXiv:1808.02640}, 2018.

\bibitem[Hen07]{Hender}
Anthony Henderson.
\newblock Nilpotent orbits of linear and cyclic quivers and {K}azhdan-{L}usztig
  polynomials of type {A}.
\newblock {\em Represent. Theory}, 11:95--121 (electronic), 2007.

\bibitem[KKKO15]{kkko0}
Seok-Jin Kang, Masaki Kashiwara, Myungho Kim, and Se-jin Oh.
\newblock Simplicity of heads and socles of tensor products.
\newblock {\em Compos. Math.}, 151(2):377--396, 2015.

\bibitem[KT02]{kash-parab}
Masaki Kashiwara and Toshiyuki Tanisaki.
\newblock Parabolic {K}azhdan-{L}usztig polynomials and {S}chubert varieties.
\newblock {\em J. Algebra}, 249(2):306--325, 2002.

\bibitem[Lap17]{lapid-conj}
Erez Lapid.
\newblock Conjectures about certain parabolic {K}azhdan--{L}usztig polynomials.
\newblock {\em arXiv preprint arXiv:1705.06517}, 2017.

\bibitem[LM02]{leclerc-m}
Bernard Leclerc and Hyohe Miyachi.
\newblock Some closed formulas for canonical bases of {F}ock spaces.
\newblock {\em Represent. Theory}, 6:290--312, 2002.

\bibitem[LM16a]{LM3}
Erez Lapid and Alberto Minguez.
\newblock Geometric conditions for $\square$-irreducibility of certain
  representations of the general linear group over a non-archimedean local
  field.
\newblock {\em arXiv preprint arXiv:1605.08545}, 2016.

\bibitem[LM16b]{LM2}
Erez Lapid and Alberto Minguez.
\newblock On parabolic induction on inner forms of the general linear group
  over a non-archimedean local field.
\newblock {\em Selecta Math. (N.S.)}, 22(4):2347--2400, 2016.

\bibitem[LNT03]{LNT}
Bernard Leclerc, Maxim Nazarov, and Jean-Yves Thibon.
\newblock Induced representations of affine {H}ecke algebras and canonical
  bases of quantum groups.
\newblock In {\em Studies in memory of {I}ssai {S}chur ({C}hevaleret/{R}ehovot,
  2000)}, volume 210 of {\em Progr. Math.}, pages 115--153. Birkh\"auser
  Boston, Boston, MA, 2003.

\bibitem[Lus90]{lus-canonical}
G.~Lusztig.
\newblock Canonical bases arising from quantized enveloping algebras.
\newblock {\em J. Amer. Math. Soc.}, 3(2):447--498, 1990.

\bibitem[LW17]{anti-sph}
Nicolas Libedinsky and Geordie Williamson.
\newblock The anti-spherical category.
\newblock {\em arXiv preprint arXiv:1702.00459}, 2017.

\bibitem[Zel81]{zel-kl}
A.~V. Zelevinski\u\i.
\newblock The {$p$}-adic analogue of the {K}azhdan-{L}usztig conjecture.
\newblock {\em Funktsional. Anal. i Prilozhen.}, 15(2):9--21, 96, 1981.

\bibitem[Zel85]{zel-2rem}
A.~V. Zelevinski\u\i.
\newblock Two remarks on graded nilpotent classes.
\newblock {\em Uspekhi Mat. Nauk}, 40(1(241)):199--200, 1985.

\end{thebibliography}

\end{document}